\newcounter{mnotecount}[section]
\newcommand{\rmnote}[1]{}
\DeclareFontFamily{U}{mathb}{\hyphenchar\font45}
\DeclareFontShape{U}{mathb}{m}{n}{
      <5> <6> <7> <8> <9> <10> gen * mathb
      <10.95> mathb10 <12> <14.4> <17.28> <20.74> <24.88> mathb12
      }{}
\DeclareSymbolFont{mathb}{U}{mathb}{m}{n}
\let\dot\relax
\DeclareMathAccent{\dot}{0}{mathb}{"39}
\let\ddot\relax
\DeclareMathAccent{\ddot}{0}{mathb}{"3A}
\let\dddot\relax
\DeclareMathAccent{\dddot}{0}{mathb}{"3B}
\let\ddddot\relax
\DeclareMathAccent{\ddddot}{0}{mathb}{"3C}
\theoremstyle{plain}
\newtheorem*{theorem*}{Theorem}
\newtheorem{theorem}{Theorem}[subsection]
\newtheorem*{lemma*}{Lemma}
\newtheorem{lemma}[theorem]{Lemma}
\newtheorem*{proposition*}{Proposition}
\newtheorem{proposition}[theorem]{Proposition}
\newtheorem*{corollary*}{Corollary}
\newtheorem{corollary}[theorem]{Corollary}
\newtheorem*{claim*}{Claim}
\newtheorem*{conjecture*}{Conjecture}
\newtheorem*{question*}{Question}
\theoremstyle{definition}
\newtheorem*{definition*}{Definition}
\newtheorem{definition}[theorem]{Definition}
\newtheorem*{example*}{Example}
\newtheorem{example}[theorem]{Example}
\newtheorem*{algorithm*}{Algorithm}
\newtheorem*{remark*}{Remark}
\newtheorem*{remarks*}{Remarks}
\newtheorem{remark}[theorem]{Remark}
\newtheorem*{convention*}{Convention}
\numberwithin{equation}{subsection}
\newcommand{\al}{\alpha}
\newcommand{\ep}{\epsilon}
\newcommand{\la}{\lambda}
\newcommand{\si}{\sigma}
\newcommand{\ta}{\tau}
\newcommand{\vh}{\varphi}
\newcommand{\C}{\mathbb{C}}
\newcommand{\I}{\mathbb{I}}
\newcommand{\R}{\mathbb{R}}
\newcommand{\p}{\partial}
\renewcommand{\Im}{\mathrm{Im}}
\renewcommand{\o}{\circ}
\newcommand{\on}{\operatorname}
\newcommand{\sr}[1]%
{\ifmmode{}^\dagger\else${}^\dagger$\fi\ifvmode
\vbox to 0pt{\vss
 \hbox to 0pt{\hskip\hsize\hskip1em
 \vbox{\hsize3cm\raggedright\pretolerance10000
 \noindent #1\hfill}\hss}\vss}\else
 \vadjust{\vbox to0pt{\vss%
 \hbox to 0pt{\hskip\hsize\hskip1em%
 \vbox{\hsize3cm\raggedright\pretolerance10000%
 \noindent #1\hfill}\hss}\vss}}\fi%
}
\newcommand{\ol}{\overline}
\title[]
{Roots of G\r{a}rding hyperbolic polynomials}
\author[A.~Rainer]{Armin Rainer}
\address{Fakult\"at f\"ur Mathematik, Universit\"at Wien, 
Oskar-Morgenstern-Platz~1, A-1090 Wien, Austria}
\email{armin.rainer@univie.ac.at}
\begin{document}

\begin{abstract}
We explore the regularity of the roots of G\r{a}rding hyperbolic polynomials and real stable polynomials. 
As an application we obtain new regularity results of Sobolev type for the eigenvalues of Hermitian matrices and for 
the singular values of arbitrary matrices. These results are optimal among all Sobolev spaces.   
\end{abstract}

\thanks{The author was supported by the Austrian Science Fund (FWF), START Programme Y963 and P~32905-N}
\keywords{Hyperbolic polynomials, real stable polynomials, regularity of roots, Hermitian matrices, regularity of eigenvalues and singular values}
\subjclass[2010]{
26C05, 
15A18, 
32A08, 
46E35, 
}
\date{\today}

\maketitle

\subsection{Introduction} 

A celebrated theorem of Bronshtein \cite{Bronshtein79} (see also \cite{ParusinskiRainerHyp}) states that the roots of a monic univariate \emph{real-rooted} polynomial of degree $d$ with coefficients 
$C^{d-1,1}$-functions defined in some open set $U \subseteq \R^m$ can be represented by functions that are locally Lipschitz in $U$. 
(A $C^{k,\al}$-function is by definition a function that is $k$-times continuously differentiable and whose partial derivatives of order $k$ satisfy a local $\al$-H\"older condition.)
That means, 
if $a_j \in C^{d-1,1}(U)$, $j= 1,\ldots,d$, and all roots of the monic polynomial $$P_{a(x)}(Z) = Z^d + \sum_{j=1}^d a_j(x) Z^{d-j}$$
are real for all $x \in U$, then there exist functions $\la_j \in C^{0,1}(U)$, $j= 1,\ldots,d$, such that  
\begin{equation} \label{eq:Bronshtein}
	P_{a(x)}(Z) 
	= \prod_{j=1}^d (Z-\la_j(x)), \quad  x \in U.
\end{equation}
Let us call any $d$-tuple of functions $\la = (\la_1,\ldots,\la_d)$ that satisfy \eqref{eq:Bronshtein} 
a \emph{system of the roots} of $P_a$.
One can show that \emph{any} continuous system $\la = (\la_1,\ldots,\la_d)$ of the roots of $P_a$ with $a_j \in C^{d-1,1}(U)$, $j= 1,\ldots,d$, is locally Lipschitz.
And there are explicit bounds for the $C^{0,1}$-norm of the $\la_j$ in terms of the coefficients $a_j$; 
see \cite{ParusinskiRainerHyp}. 
We shall refer to these results as \emph{Bronshtein's theorem}.

That Bronshtein's theorem is sharp is manifest already in dimension $m=1$. 
If the coefficients $a=(a_1,\ldots,a_d)$ are of class $C^{d-1,1}$, 
then generally there are no differentiable systems of the roots of $P_a$; 
e.g.\ no function $\la$ with $\la^2=f$, where $f$ is the $C^{1,1}$-function $f(t) = t^2 \sin^2(\log |t|)$ for $t \ne0$ 
and $f(0)=0$, is differentiable at $0$.  
On the other hand, if $a$ is of class $C^d$ (resp.\ $C^{2d}$) and $m=1$, 
then there exists a $C^1$ (resp.\ twice differentiable) system of the roots of $P_a$; see \cite{ColombiniOrruPernazza12} (and also \cite{ParusinskiRainerHyp}).  
However, there are non-negative $C^\infty$-functions $f$ on $\R$ such that no function $\la$ with $\la^2 = f$ is of class 
$C^{1,\al}$ for any $\al>0$; cf.\ \cite{BBCP06}.

There are various ways to generalize univariate real-rooted polynomials to multivariate polynomials. In this paper we will 
consider \emph{G\r{a}rding hyperbolic polynomials} and \emph{real stable polynomials}. These classes of polynomials are intimately related 
and have important applications 
in many different fields including PDEs, combinatorics, optimization, functional analysis, probability etc.     

We will focus on the regularity of the roots of G\r{a}rding hyperbolic polynomials and real stable polynomials. 
By combining Bronshtein's theorem with the observation that the roots of G\r{a}rding hyperbolic polynomials are 
\emph{difference-convex} functions we obtain uniform regularity results for the roots of the considered classes of 
real-rooted polynomials  which in general are optimal among Sobolev spaces, see \Cref{thm:hyperbolic}.

As an application we obtain new regularity results for the eigenvalues of Hermitian matrices and for the singular values of arbitrary matrices.  
In particular, we will prove in \Cref{thm:Hermitian} that any $C^{1,1}$-curve of Hermitian $d \times d$ matrices admits a system of its eigenvalues 
that is locally of Sobolev class $W^{2,1}$. 
This result is uniform, in the sense explained in \Cref{thm:Hermitian}(3), and optimal among all Sobolev spaces $W^{k,p}$, see \Cref{rem:optimal}.   
(A $W^{k,p}$-function is by definition a $p$-integrable function with $p$-integrable weak partial derivatives up to order $k$.)

As a byproduct we obtain a new simple proof of the fact that a $C^1$-curve of Hermitian matrices admits a $C^1$-system of its eigenvalues. 
This was proved for symmetric matrices by \cite{Rellich69} and for normal matrices by \cite{RainerN} by different methods.  

\subsubsection*{Acknowledgment}
This note arose from a discussion with Alexander Lytchak who introduced me to difference-convex functions.

\subsection{Hyperbolic and real stable polynomials}
We call a univariate polynomial $f(Z) \in \C[Z]$ \emph{real-rooted} if all roots of $f$ are real; then also all coefficients are real 
and $f(Z) \in \R[Z]$. There is the following multivariate generalization.

\begin{definition}
	A polynomial $f(Z_1,\ldots,Z_n) \in \C[Z_1,\ldots,Z_n]$ is said to be \emph{stable} if 
	$f(z_1,\ldots,z_n) \ne 0$ for all $(z_1,\ldots,z_n) \in \C^n$ with $\Im (z_j) >0$ for $j = 1,\ldots,n$. 
	A stable polynomial with real coefficients is called a \emph{real stable} polynomial.  
\end{definition}

A univariate polynomial with real coefficients is real stable if and only if it is real-rooted. 
	A polynomial $f(Z_1,\ldots,Z_n) \in \C[Z_1,\ldots,Z_n]$ is stable (resp.\ real stable) 
	if and only if for all $x \in \R^n$ and all $v \in \R^n_{>0}$ the univariate polynomial $f(x + T v) \in \C[T]$ 
	is stable (resp.\ real stable).
Real stable polynomials played a crucial role in the recent proof of the Kadison--Singer conjecture \cite{Marcus:2015aa}.

A different but related multivariate generalization of real-rootedness is 
G\r{a}rding hyperbolicity.

\begin{definition}
	A homogeneous polynomial $f(Z_1,\ldots,Z_n) \in \R[Z_1,\ldots,Z_n]$ is said to be \emph{G\r{a}rding hyperbolic} 
	with respect to a given vector $v \in \R^n$ 
	if $f(v) \ne 0$ and 
	for all $x \in \R^n$ 
	the univariate polynomial $f(x - T v) \in \R[T]$ is real-rooted.
\end{definition} 

This notion was introduced by G\r{a}rding \cite{Garding51} in the 1950s. He showed that $f$ being hyperbolic with respect to a direction $v$ 
is a necessary and sufficient condition for local well-posedness of the Cauchy problem with principal symbol $f$ 
and initial data on a hyperplane with normal vector $v$. 
Hyperbolic polynomials have found many applications ever since, for instance, in Gurvits' proof \cite{Gurvits:2008aa} of the van der Waerden conjecture. 

\begin{example}
	The determinant on the real vector space of $d\times d$ Hermitian matrices 
	is G\r{a}rding hyperbolic with respect to the identity matrix $\I$.	
\end{example}

The relation between real stable and G\r{a}rding hyperbolic polynomials is captured in the following

\begin{proposition}[{\cite[Proposition 1.1]{Borcea:2010aa}}] \label{realstablehyperbolic}
	Let $f \in \R[Z_1,\ldots,Z_n]$ be of degree $d$ and let $f_H \in \R[Z_1,\ldots,Z_n, W]$ be the unique 
	homogeneous polynomial of degree $d$ such that 
	\[
		f_H(Z_1,\ldots,Z_n,1) = f(Z_1,\ldots,Z_n).
	\]
	Then $f$ is real stable if and only if $f_H$ is G\r{a}rding hyperbolic with respect to 
	every vector $v \in \R^{n}_{>0} \times \{0\}$.
\end{proposition}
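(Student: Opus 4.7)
The plan is to prove both implications directly via the identity $f(z) = f_H(z,1)$ and the characterization of real stability recorded just before the statement. For the $(\Leftarrow)$ direction, I would start with an arbitrary $z \in \C^n$ satisfying $\Im z_j > 0$ for all $j$, write $z = x + iy$ with $y \in \R^n_{>0}$, and consider the univariate polynomial $P(T) := f_H\big((x,1) - T(y,0)\big)$. G\r{a}rding hyperbolicity at $v' = (y,0)$ gives that $P$ is real-rooted and of degree exactly $d$, since its leading coefficient is $(-1)^d f_H(v') = (-1)^d f_d(y) \neq 0$. Because $(x,1) - (-i)(y,0) = (z,1)$, one has $P(-i) = f_H(z,1) = f(z)$, and since $-i$ is non-real this forces $f(z) \neq 0$, which is the desired real stability of $f$.

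For the $(\Rightarrow)$ direction, the first step would be to promote real stability of $f$ to real stability of its top-degree homogeneous part $f_d$. Setting $g_t(z) := t^{-d} f(tz)$ for $t > 0$, the family $g_t$ converges uniformly on compact subsets of $\C^n$ to $f_d$ as $t \to \infty$, and each $g_t$ is nowhere zero on the connected tube $U := \{z \in \C^n : \Im z_j > 0 \text{ for all } j\}$. Since $\deg f = d$ forces $f_d \not\equiv 0$, the several-variable Hurwitz theorem applied on $U$ yields that $f_d$ is also nowhere zero on $U$, i.e., $f_d$ is real stable. Specializing at $z = iv$ with $v \in \R^n_{>0}$ and using homogeneity, $f_d(iv) = i^d f_d(v)$ then gives $f_d(v) \neq 0$.

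It remains to verify the two hyperbolicity conditions for $f_H$ with respect to an arbitrary $v' = (v,0)$, $v \in \R^n_{>0}$. The first, $f_H(v') = f_d(v) \neq 0$, is immediate from the previous step. For the second, I would analyze $P(T) := f_H((x,w) - Tv') = f_H(x - Tv, w)$ for $(x,w) \in \R^{n+1}$ by cases. If $w \neq 0$, using $f_H(Z,W) = W^d f(Z/W)$ gives $P(T) = w^d f\big(x/w + (-T/w)v\big)$, which is a real affine rescaling in $T$ of the polynomial $s \mapsto f(x/w + sv)$; the latter is real-rooted by the characterization of real stability applied to $f$. If $w = 0$, then $P(T) = f_d(x - Tv)$, which is real-rooted by the same characterization applied to the real stable polynomial $f_d$ from the previous step.

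The main obstacle is this first step of the $(\Rightarrow)$ direction: passing from real stability of $f$ to real stability of $f_d$. I do not see a purely algebraic route, and my plan relies on Hurwitz's theorem in several complex variables together with the connectedness of the tube $\{\Im z_j > 0\,\forall j\}$; everything else is a routine matter of unwinding the homogenization and the characterization already recorded in the excerpt.
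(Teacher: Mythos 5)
Your proof is correct. Note that the paper does not prove this statement at all --- it is quoted verbatim from \cite[Proposition 1.1]{Borcea:2010aa} --- so there is no in-paper argument to compare against; what you wrote is essentially the standard proof from the literature. Both directions check out: in the $(\Leftarrow)$ direction the leading-coefficient computation $(-1)^d f_H(v')\neq 0$ is exactly what guarantees $P\not\equiv 0$, so real-rootedness forces $P(-i)=f(z)\neq 0$; in the $(\Rightarrow)$ direction the only genuinely non-formal step is, as you say, that the top homogeneous part $f_d$ of a real stable polynomial of degree $d$ is again real stable, and your treatment of it is sound: $g_t=t^{-d}f(t\,\cdot)$ is nowhere zero on the connected tube $\{\Im z_j>0\ \forall j\}$, converges locally uniformly to $f_d\not\equiv 0$, and the several-variable Hurwitz theorem (or, equivalently, the one-variable version applied along complex lines in the tube) yields that $f_d$ is nowhere zero there; homogeneity then gives $f_H(v,0)=f_d(v)\neq 0$ for $v\in\R^n_{>0}$. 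The case split $w\neq 0$ versus $w=0$ correctly reduces real-rootedness of $f_H((x,w)-T(v,0))$ to the univariate characterization of real stability for $f$ and for $f_d$, respectively, with the degenerate cases (constant polynomials in $T$) being vacuously real-rooted. In short: complete and correct, and the Hurwitz limiting argument you flag as the main obstacle is precisely how this step is handled in the cited source.
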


Interesting examples of real stable polynomials (and hence of hyperbolic polynomials) 
are generated as follows (see \cite[Proposition 1.12]{Borcea:2010aa}):
Let $A_1,\ldots,A_n$ be positive semidefinite $m \times m$ matrices and $B$ a (complex) Hermitian $m \times m$ matrix. Then the 
polynomial
\[
		f(Z_1,\ldots,Z_n) = \det \Big( \sum_{j=1}^n Z_j A_j  + B\Big)
\]
is either real stable or identically zero. 
Conversely, any real stable polynomial in two variables $Z_1$ and $Z_2$ can be written as $\pm \det(Z_1 A_1 + Z_2 A_2 + B)$, 
where $A_j$ are positive semidefinite and $B$ is symmetric \cite[Theorem 1.13]{Borcea:2010aa}. The latter result is 
based on (and corresponds to) the Lax conjecture \cite{Lax:1958aa} for G\r{a}rding hyperbolic polynomials which was recently established by 
\cite{Lewis:2005aa} 
relying on work by \cite{Helton:2007aa}, \cite{Vinnikov:1993aa}; see also \Cref{rem:Lax}.   

We refer to the survey article \cite{BGLS01} for background on G\r{a}rding hyperbolic polynomials and 
more ways to generate examples.

\subsection{Difference-convex functions}

We shall work with the the class of difference-convex functions.
This is a subclass of the class of locally Lipschitz functions which arises as the smallest vector space that 
contains all continuous convex functions on a given set. We follow the survey article \cite{BacakBorwein11}; see also \cite{Hiriart-Urruty:1985aa}.

\begin{definition}
	Let $U \subseteq \R^n$ be a convex set.
A function $f : U \to \R$ is said to be \emph{difference-convex} ($DC$) if 
it can be written as the difference of two continuous convex functions on $U$.  	
A function $f$ is called \emph{locally difference-convex}  
if each point in the domain of $f$ has a convex neighborhood on which $f$ is 
difference-convex. 
\end{definition}

Let us denote by $DC(U)$ the space of all difference-convex functions on $U$    
and use the obvious notation $DC_{\on{loc}}(U)$ 
for the local class. We have $DC_{\on{loc}}(U) \subseteq C^{0,1}(U)$, by the local Lipschitz properties of locally bounded convex functions. 

Next we collect some basic properties of difference-convex functions.

\begin{lemma}[\cite{BacakBorwein11}] \label{properties}
Let $U \subseteq \R^n$ be an open convex set. Then:
\begin{enumerate}
  \item $DC(U)$ is an algebra. 
  \item Each $DC$-function on $U$ is the difference of two non-negative convex continuous functions on $U$.
  \item If $f \in DC(U)$ then positive and negative parts $f^\pm$ of $f$ as well as $|f|$  belong to $DC(U)$. 
\end{enumerate}
\end{lemma}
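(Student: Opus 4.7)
My plan is to establish (2) first, then derive (3) as an immediate consequence, and finally complete (1) by using (2) to handle products.

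For (2), given $f = g - h$ with $g, h$ convex continuous on $U$, I would fix an arbitrary anchor $x_0 \in U$. By standard convex analysis, each of $g, h$ admits a supporting affine function at $x_0$: there exist $\xi_g, \xi_h \in \R^n$ with $g(x) \ge g(x_0) + \langle \xi_g, x - x_0\rangle$ and $h(x) \ge h(x_0) + \langle \xi_h, x - x_0\rangle$ for all $x \in U$. Setting $G(x) := g(x) - g(x_0) - \langle \xi_g, x - x_0\rangle$ and $H(x) := h(x) - h(x_0) - \langle \xi_h, x - x_0\rangle$ yields non-negative convex continuous functions on $U$. The remainder $A(x) := g(x_0) - h(x_0) + \langle \xi_g - \xi_h, x - x_0\rangle$ is affine, hence splits as $A = A_+ - A_-$ with $A_\pm := \max\{\pm A, 0\}$ non-negative and convex (each being the maximum of two affine functions). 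Then
\[
 f = (G + A_+) - (H + A_-)
\]
is the desired representation.

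For (3), I would observe that for $f = g - h$ we have $f^+ = \max(g - h, 0) = \max(g, h) - h$ and $f^- = \max(h - g, 0) = \max(g, h) - g$. Since $\max(g, h)$ is convex and continuous, both $f^\pm$ lie in $DC(U)$, and so does $|f| = f^+ + f^-$.

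For (1), closure under sums and scalar multiplication is immediate, since sums of convex functions are convex, and $\lambda(g - h)$ equals $\lambda g - \lambda h$ when $\lambda \ge 0$ and $(-\lambda) h - (-\lambda) g$ otherwise. For products I would invoke (2) to write two DC functions as $f = g - h$ and $f' = g' - h'$ with all four factors non-negative, convex, and continuous. Since $ff' = gg' + hh' - (gh' + hg')$, it suffices to show that the product $uv$ of two non-negative convex continuous functions is in $DC(U)$. This follows from the polarization identity $2uv = (u + v)^2 - u^2 - v^2$ together with the observation that the square of a non-negative convex function is convex (because $t \mapsto t^2$ is non-decreasing and convex on $[0, \infty)$): the right-hand side then exhibits $uv$ as a difference of two convex continuous functions.

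The main obstacle is securing non-negative representatives in (2) globally on $U$, which may be unbounded, so simply adding a constant does not suffice. The supporting hyperplane at a single anchor point $x_0$ resolves this cleanly, reducing the task to splitting an affine remainder into positive and negative parts. Once (2) is in hand, the product in (1) boils down to polarization, which is the only genuinely non-trivial ingredient in the algebra property.
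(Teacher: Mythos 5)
Your proof is correct; the paper itself gives no argument for this lemma but simply cites \cite{BacakBorwein11}, and your route (supporting affine minorants at an anchor point to get non-negative representatives, the identity $f^+=\max(g,h)-h$ for the positive part, and the polarization trick $2uv=(u+v)^2-u^2-v^2$ with squares of non-negative convex functions for products) is essentially the standard proof found in that reference.
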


For us the following properties will be crucial.

\begin{lemma}[{\cite{Hartman:1959aa}}] \label{properties2}
 We have:
	\begin{enumerate}
		\item $f \in DC(\R^n)$ if and only if $f \in DC_{\on{loc}}(\R^n)$. 
  \item Let $A \subseteq \R^m$ be convex and either open or closed. 
  Let $B \subseteq \R^n$ be convex and open. If $f : A \to B$ is $DC$ (that is each component is $DC$) and 
  $g : B \to \R$ is $DC$, then $g\o f$ is locally $DC$.
  \item Let $A \subseteq \R^m$ be convex and either open or closed. 
  If a $DC$-function $f$ is nowhere vanishing on $A$, then $1/f$ is $DC$ on $A$.
	\end{enumerate}
\end{lemma}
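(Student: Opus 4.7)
The plan is to treat the three parts separately, with part (2) being the workhorse from which part (3) follows easily, while part (1) requires the most work.

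For part (1), the nontrivial implication is $DC_{\on{loc}}(\R^n) \subseteq DC(\R^n)$. I would exhaust $\R^n$ by an increasing sequence of closed balls and, on each, combine a finite cover by convex sets on which $f=g_i-h_i$ with $g_i,h_i$ continuous convex. The key obstruction is that convex pieces cannot be glued via partitions of unity. Following Hartman, I would introduce a quadratic shield $C|x-x_0|^2$ at each covering point with $C$ large enough that the sum of the local convex piece with the shield still provides control away from $x_0$; then one assembles a global pair $(g,h)$ by taking suitable pointwise maxima (which preserve convexity) of these shielded local pieces. Compatibility of values on overlaps is ensured by choosing $C$ uniformly and using compactness. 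This part I expect to be the most technical.

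For part (2), I would reduce to the case where $g$ is convex (by DC-decomposing $g=g_1-g_2$ and using linearity, and by arguing componentwise in the range). Fix a relatively compact open convex $V\Subset A$ on which every component $f_i=\phi_i-\psi_i$ has $\phi_i,\psi_i$ convex continuous, and let $L$ be a Lipschitz constant for $g$ on a neighborhood of $f(\ol V)$ in $B$. I claim that
\[
	F := g\o f + L\sum_i (\phi_i+\psi_i) \quad \text{is convex on } V.
\]
To verify this, take $x,y\in V$, $t\in[0,1]$, and set $z=tx+(1-t)y$. Convexity of $\phi_i,\psi_i$ yields nonnegative deficits $a_i,b_i$ with $f_i(z)=tf_i(x)+(1-t)f_i(y)-a_i+b_i$. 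The Lipschitz bound and convexity of $g$ then give
\[
	g(f(z)) \le t\,g(f(x))+(1-t)\,g(f(y)) + L\sum_i (a_i+b_i),
\]
and substituting the definitions of $a_i,b_i$ produces exactly the convexity inequality $F(z)\le tF(x)+(1-t)F(y)$. Hence $g\o f=F-L\sum_i(\phi_i+\psi_i)$ is DC on $V$, so $g\o f$ is locally DC on $A$.

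For part (3), since $A$ is convex and hence connected, and $f$ is continuous and nowhere zero, $f$ has constant sign on $A$, so $f(A)$ is contained in an open half-line. The function $t\mapsto 1/t$ is convex on $(0,\infty)$ and concave on $(-\infty,0)$, hence DC on either half-line. Applying part (2) to the composition yields that $1/f$ is locally DC on $A$, which by part (1) (or its open/closed convex variant, obtained from (1) by the same shielding argument restricted to $A$) upgrades to DC on $A$. The main obstacle remains part (1): assembling global convex functions out of local ones without a partition-of-unity tool forces one through Hartman's shielding and pointwise-max construction, which is substantially more delicate than parts (2) and (3).
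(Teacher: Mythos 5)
The paper itself gives no proof of this lemma --- it is quoted from Hartman \cite{Hartman:1959aa} --- so your attempt must stand on its own. Your part (2) is correct and is essentially the standard argument: after decomposing $g$ and reducing to $g$ convex, the computation with the convexity deficits $a_i,b_i$ and a Lipschitz constant $L$ of $g$ does show that $g\circ f+L\sum_i(\phi_i+\psi_i)$ is convex on $V$; the only small points to tidy are that $L$ must be a Lipschitz constant on the \emph{convex hull} of $f(\ol V)$ (the comparison point $tf(x)+(1-t)f(y)$ need not lie in $f(\ol V)$, but the hull is a compact subset of the open convex $B$, where the continuous convex $g$ is Lipschitz) and that $L$ must be taken with respect to a norm for which $|b-a|\le\sum_i(a_i+b_i)$. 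Part (3) is also the right reduction: constant sign on the connected set $A$, convexity of $t\mapsto 1/t$ on a half-line, composition via (2), and then a local-to-global upgrade.

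The genuine gap is in part (1), on which (3) also leans. The mechanism you propose --- shield each local convex piece by $C|x-x_0|^2$ and assemble a global pair $(g,h)$ as pointwise maxima of the shielded pieces --- does not yield a decomposition of $f$: one needs $g-h=f$ \emph{exactly} everywhere, and the difference of two pointwise maxima equals the local $g_i-h_i=f$ only at points where both maxima are attained at the same index $i$ and which moreover lie in the patch where the $i$-th decomposition is valid; nothing in the shielding enforces this (indeed adding $C|x-x_0|^2$ makes the $i$-th pieces dominant \emph{far} from $x_0$, not near it), and ``choosing $C$ uniformly and using compactness'' does not address it. In addition, even granted DC on every closed ball, the passage to all of $\R^n$ is not automatic: one must modify the decompositions along the exhaustion so that they stabilize, which Hartman achieves by adding convex correction terms vanishing on the previous ball. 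Hartman's actual local-to-global proof works by a different device: a compact convex set is cut by hyperplanes into finitely many convex pieces, each contained in a patch, and two decompositions are glued across a hyperplane by adding an affine function of the slicing coordinate with sufficiently large slope (the one-dimensional convex gluing trick), proceeding by induction; maxima of quadratically shielded pieces do not appear. As written, part (1) --- and hence the global conclusion asserted in part (3) --- is not established.
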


We shall also need

\begin{lemma}[{\cite[Lemma 4.8]{VeselyZajicek89}}] \label{lem:mixing}
 	Let $U \subseteq \R^n$ be an open convex set and let 
 	$f_j \in DC(U)$, $j = 1,\ldots,d$.
 	If $f : U \to \R$ is continuous and $f(x) \in \{f_1(x),\ldots,f_d(x)\}$ for all $x \in U$, 
 	then $f \in DC(U)$.  
\end{lemma}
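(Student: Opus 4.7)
My plan is to prove the lemma by induction on $d$, with the key algebraic ingredient being closure of $DC(U)$ under finite pointwise maximum and minimum. From the identities $\max(u,v) = (u+v+|u-v|)/2$ and $\min(u,v) = (u+v-|u-v|)/2$, combined with parts (1) and (3) of Lemma \ref{properties}, one obtains $\max(u,v), \min(u,v) \in DC(U)$ whenever $u, v \in DC(U)$. By iteration, finite max and min of any subfamily of $\{f_1,\ldots,f_d\}$ lie in $DC(U)$; in particular so do the order statistics $g_1 \le g_2 \le \cdots \le g_d$ of the $f_j$.

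Next I localize: it suffices to show that every $x_0 \in U$ has a convex open neighborhood on which $f$ is $DC$, by an analog of Lemma \ref{properties2}(1) on the open convex set $U$ (local $DC$ glues to global $DC$ via a convex exhaustion and the vector-space structure). Around a fixed $x_0$, continuity of $f$ and of the $f_j$'s lets me discard indices $j$ with $f_j(x_0) \ne f(x_0)$: such $f_j$'s stay bounded away from $f$ on a small neighborhood $V$ of $x_0$, so $f(x) \in \{f_j(x) : f_j(x_0) = f(x_0)\}$ for all $x \in V$. After subtracting the constant $f(x_0)$, I may therefore assume $f_j(x_0) = 0$ for all remaining indices $j$ and $f(x_0) = 0$.

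Now I run the induction on the reduced $d$; the base $d=1$ is trivial since then $f = f_1$. For the inductive step, decompose $V = \bigcup_j C_j$ into the closed sets $C_j = \{x \in V : f(x) = f_j(x)\}$. On $C_j$ one has $f = f_j$, and switches between $C_j$ and $C_k$ can only occur inside the $DC$-defined coincidence locus $\{f_j = f_k\} = \{f_j - f_k = 0\}$; on each connected component of the complement of this locus (within $V$), continuity of $f$ forces $f$ to equal just one of $f_j, f_k$ throughout. Combined with closure of $DC(U)$ under $\max$, $\min$, $|\cdot|$, sums and products (Lemma \ref{properties}), this combinatorial structure allows me to assemble a local $DC$ representation of $f$ by explicit max/min combinations of the $f_j$ and the $|f_j - f_k|$.

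The principal obstacle I anticipate is the bookkeeping needed to glue local $DC$ representations across coincidence loci when three or more of the $f_j$'s come together simultaneously and $f$ switches among them in a topologically intricate way. I expect the cleanest formulation to follow Vesel\'y--Zaj\'\i\v{c}ek by constructing the gluing inductively via alternating $\max$ and $\min$ operations applied to subfamilies of $\{f_j\}$ adapted to the structure of the selection $f$, so that the explicit combinatorial formula for $f|_V$ lives manifestly in $DC(V)$.
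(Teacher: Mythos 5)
First, note that the paper does not prove this lemma at all: it is quoted from Vesel\'y--Zaj\'{\i}\v{c}ek \cite[Lemma 4.8]{VeselyZajicek89}, so there is no internal proof to compare against and your proposal must stand on its own. The preparatory reductions you carry out are fine: closure of $DC(U)$ under $\max$, $\min$, $|\cdot|$ via Lemma~\ref{properties}; localization (Hartman's theorem does hold on open convex subsets of $\R^n$, not just on $\R^n$ as in Lemma~\ref{properties2}(1), but you should cite that version explicitly); and discarding the indices $j$ with $f_j(x_0)\ne f(x_0)$. The problem is that the inductive step --- the entire content of the mixing lemma --- is never executed. You name it yourself as ``the principal obstacle'' and defer to the expectation that an alternating $\max$/$\min$ construction ``\`a la Vesel\'y--Zaj\'{\i}\v{c}ek'' will work. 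A proof that stops at the hard case is not a proof. (A smaller inaccuracy: for $d>2$, on a component of $\{f_j\ne f_k\}$ continuity does not force $f$ to coincide with one of $f_j,f_k$ throughout, since the selection may pass through a third function $f_l$.)

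Worse, the mechanism you propose for the missing step provably cannot work. Take $d=2$, $f_1\equiv 0$, $f_2(x,y)=xy=\tfrac14\bigl((x+y)^2-(x-y)^2\bigr)\in DC(\R^2)$, and $f(x,y)=x\max(y,0)$. Then $f$ is continuous and $f(p)\in\{f_1(p),f_2(p)\}$ for every $p$ (it equals $f_2$ on $\{y\ge 0\}$ and $f_1$ on $\{y\le 0\}$), yet on no neighborhood of the origin does $f$ agree with any of $0$, $xy$, $\max(0,xy)$, $\min(0,xy)$: on the quadrant $\{x>0,y>0\}$ the selection picks the larger of the two functions, on $\{x<0,y>0\}$ the smaller. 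In fact every function obtained from $f_1$, $f_2$, $|f_1-f_2|$ by sums, products, $\max$, $\min$ and $|\cdot|$ is a function of the single quantity $xy$, whereas $f(\ep,\ep)=\ep^2\ne 0=f(-\ep,-\ep)$ although $xy=\ep^2$ at both points. So no ``explicit combinatorial formula'' of the kind your plan relies on exists, even locally, even for $d=2$; the way the selection distributes itself over the components of the complement of the coincidence locus carries information that lattice combinations of the $f_j$ cannot see. The actual Vesel\'y--Zaj\'{\i}\v{c}ek argument proceeds differently: one writes all $f_j=g_j-h$ with a \emph{common} convex control $h$, reduces to a continuous selection of convex functions, and bounds the convexity defect of that selection directly; some such quantitative device is indispensable and is exactly what your proposal is missing.
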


Note that, if $U \subseteq \R^n$ is open, then $C^{1,1}(U) \subseteq DC_{\on{loc}}(U)$, cf.\ \cite[Theorem 11]{Vesely:2003aa} 
and \cite[Section II]{Hiriart-Urruty:1985aa}. 
In particular, all real polynomials on $\R^n$ belong to $DC(\R^n)$; see also \cite[Section 3.3]{BacakBorwein11} for a direct argument.

On the other hand the first order partial derivatives of a difference-convex function $f : U \to \R$ 
have bounded variation, i.e., the weak second order partial derivatives of $f$ are signed Radon measures. 
This follows from Dudley's result \cite{Dudley:1977aa} that a Schwartz distribution is a convex function if and only if its second derivative 
is a non-negative matrix-valued Radon measure. 
In one dimension, a real-valued function $f$ on a compact interval is difference-convex if and only if 
$f$ is absolutely continuous and $f'$ has bounded variation.

\subsection{Roots of G\r{a}rding hyperbolic polynomials}

Let $f(Z_1,\ldots,Z_n) \in \R[Z_1,\ldots,Z_n]$ be a homogeneous polynomial of degree $d$ 
which is G\r{a}rding hyperbolic with respect to a direction $v \in \R^n$.
We may factorize
\[
	f(x + T v ) = f(v) \prod_{j=1}^d (T + \la_j^\downarrow(x)),
\]
where
\[
	\la^\downarrow_1(x) \ge \ldots \ge \la^\downarrow_d(x)
\]
are the decreasingly ordered roots of $f(x - T v) \in \R[T]$.  
We call 
\[
	\la^\downarrow = (\la^\downarrow_1, \ldots, \la^\downarrow_d) : \R^n \to \R^d	
\]
the \emph{characteristic map} of $f$ with respect to $v$ and $\la^\downarrow_1, \ldots, \la^\downarrow_d$ (in no particular order) the \emph{characteristic roots}.
The polynomial $f$ is recovered by 
$f(x) = f(v) \prod_{j=1}^d \la_j^\downarrow(x)$.	
We see that being G\r{a}rding hyperbolic with respect to a direction $v$ geometrically means that any 
affine line with direction $v$ meets the real hypersurface $\{x \in \R^n : f(x) = 0\}$ in $d$ points (with multiplicities).

Note that for all $j= 1,\ldots,d$, $r \in \R_{\ge 0}$, and $s \in \R$
we have  
\begin{align}
	\begin{split} \label{eq:homogeneous}
		\la^\downarrow_j(r x + s v) &= r \la^\downarrow_j(x) + s, \quad \text{ and }
	\\
	\la^\downarrow_j(-x) &= - \la^\downarrow_{d+1-j}(x).
	\end{split}   
\end{align}
Indeed, $f(rx +s v + T v) = r^d f(x + r^{-1}(s+T) v) = r^d f(v) \prod_{j=1}^d (r^{-1}(s+T) + \la_j^\downarrow(x)) = 
f(v) \prod_{j=1}^d (T + (s + r \la_j^\downarrow(x)))$, by homogeneity. The case $r=0$ is even simpler: $f(Tv) = T^d f(v)$.
The second assertion can be seen analogously; the negative factor 
reverses the order of the roots.

G\r{a}rding \cite[Theorem 2]{Garding59} proved that $\la^\downarrow_d$ is concave which, by \eqref{eq:homogeneous}, 
is equivalent to 
$\la^\downarrow_1$ being convex. 
In view of \eqref{eq:homogeneous} the largest root $\la^\downarrow_1$ is also positively homogeneous and thus sublinear.
Recall that a function $\vh$ on $\R^n$ is \emph{sublinear} 
if  $\vh(r x + s y) \le r\vh(x) + s \vh(y)$ for all $x,y\in \R^n$ and $r,s \ge 0$;  
it is easy to see that a positively homogeneous function $\vh$ is convex if and only if it is sublinear.    
The connected component $C_f$ of the set $\R^n \setminus \{f = 0\}$ which contains $v$ is an open convex cone, one has 
$C_f = \{x \in \R^n : \la^\downarrow_d(x) >0 \}$,  
and $f$ is G\r{a}rding hyperbolic with respect to each $w \in C_f$.

In \cite[Theorem 3.1]{BGLS01} a method for generating G\r{a}rding hyperbolic polynomials from a given one was presented.
Suppose that $f$ is a homogeneous polynomial of degree $d$ which is G\r{a}rding hyperbolic with respect to $v$ and has characteristic map $\lambda^\downarrow$.
If $g$ is a homogeneous symmetric polynomial of degree $e$ on $\R^d$ 
which is G\r{a}rding hyperbolic with respect to $e_1+e_2 + \cdots + e_d$ and has characteristic map $\mu^\downarrow$, 
then $g \o \la^\downarrow$ is G\r{a}rding hyperbolic of degree $e$ with respect to $v$ and its characteristic map is $\mu^\downarrow \o \la^\downarrow$.  
(Here and below $e_i$ denotes the $i$-th standard unit vector in $\R^d$.)

For instance, the homogeneous symmetric polynomial
\[
	g_k(Y_1,\ldots,Y_d) :=  \prod_{\substack{I \subseteq  \{1,\ldots,d\}\\|I| = k}}
	 \sum_{i \in I} Y_i.
\]
of degree $\binom{d}{k}$ is G\r{a}rding hyperbolic with respect to $e_1+e_2 + \cdots + e_d$ and has the characteristic roots 
$\mu_I(Y) = \frac{1}{k} \sum_{i \in I} Y_i$, 
where $I$ ranges over the subsets of $\{1,\ldots,d\}$ with $k$ elements. 
On the set $\{Y_1 \ge Y_2 \ge \cdots \ge Y_d\}$ 
we have $\mu_I(Y) \ge \mu_J(Y)$ if and only if $\sum_{i \in I}  i \le \sum_{j \in J} j$, in particular, 
$\mu_{\{1,\ldots,k\}}$ is the largest characteristic root. 
Then $g_k \o \la^\downarrow$ is G\r{a}rding hyperbolic with respect to $v$ and has 
the largest characteristic root $\mu_{\{1,\ldots,k\}} \o \la^\downarrow = \frac{1}{k} \sum_{i=1}^k \la_i^\downarrow$. 
From G\r{a}rding's result we may conclude that, for all $k = 1,\ldots, d$, the sum of the $k$ largest roots
\[
	\si_k := \sum_{i = 1}^k \la_{i}^\downarrow
\]
is a sublinear function on $\R^n$. We immediately obtain

\begin{proposition} \label{prop:DC}
	Let $f(Z_1,\ldots,Z_n) \in \R[Z_1,\ldots,Z_n]$ be a homogeneous polynomial of degree $d$ 
	which is G\r{a}rding hyperbolic with respect to a direction $v \in \R^n$.
	The characteristic map $\la^\downarrow : \R^n \to \R^d$ is globally Lipschitz and difference-convex on $\R^n$.
\end{proposition}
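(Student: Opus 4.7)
The plan is to read off the statement directly from the sublinearity of the partial sums $\si_k = \sum_{i=1}^k \la_i^\downarrow$ established in the preceding paragraph via G\r{a}rding's composition principle applied to the polynomials $g_k$.

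First I would record that each $\si_k$, being sublinear on $\R^n$, is in particular convex and continuous (positive homogeneity of degree $1$ together with subadditivity gives convexity). Setting $\si_0 := 0$, the identity
\[
	\la^\downarrow_k = \si_k - \si_{k-1}, \quad k = 1,\ldots,d,
\]
then exhibits each component of the characteristic map as the difference of two continuous convex functions on $\R^n$, so $\la^\downarrow_k \in DC(\R^n)$ and hence $\la^\downarrow \in DC(\R^n)^d$.

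Next I would deduce the global Lipschitz estimate. A finite convex function on $\R^n$ is locally Lipschitz; combining this with positive homogeneity (see \eqref{eq:homogeneous}, first line) shows that the sublinear function $\si_k$ is in fact globally Lipschitz on $\R^n$, with Lipschitz constant bounded by $\sup_{|u|=1}|\si_k(u)|$. Since a difference of globally Lipschitz functions is globally Lipschitz, each $\la^\downarrow_k$ and therefore the whole characteristic map $\la^\downarrow$ is globally Lipschitz on $\R^n$.

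There is no real obstacle here: all the work has been done earlier, where G\r{a}rding's concavity theorem and the composition with $g_k$ were used to make $\si_k$ sublinear. The only point to be a bit careful about is that sublinearity plus finiteness on all of $\R^n$ is genuinely enough for \emph{global} (not merely local) Lipschitz continuity; this is what allows one to avoid passing to a bounded domain.
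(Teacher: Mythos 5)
Your proof is correct and follows essentially the same route as the paper: both use the sublinearity of the partial sums $\si_k$ from the preceding discussion, the telescoping identity $\la^\downarrow_k = \si_k - \si_{k-1}$ for the $DC$ property, and the fact that a finite sublinear function on $\R^n$ is globally Lipschitz (which the paper simply cites from Rockafellar, while you justify it directly). Nothing is missing.
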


\begin{proof}
	Any finite sublinear function on $\R^n$ is globally Lipschitz; this follows from \cite[Corollary 10.5.1]{Rockafellar70}.
	By the above discussion, the functions $\si_k$, $k = 1,\ldots,d$, are convex and globally Lipschitz. 
	Thus each $\la^\downarrow_k = \si_k - \si_{k-1}$ is difference-convex and globally Lipschitz. 
\end{proof}

Combined with \Cref{lem:mixing} this leads to

\begin{corollary} \label{cor:DC}
	In the setting of \Cref{prop:DC} 
	every continuous function $\la : \R^n \to \R$
	satisfying $f(x - \la(x) v) = 0$ for all $x \in \R^n$ is globally Lipschitz and difference-convex on $\R^n$. 
\end{corollary}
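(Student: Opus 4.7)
The plan is to first identify $\la$ as a continuous selection from the characteristic roots $\la_j^\downarrow$ and then split the conclusion into its two parts: difference-convexity via \Cref{lem:mixing}, and uniform global Lipschitz continuity via a one-dimensional Lebesgue-density argument on line segments.

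The starting point is the factorization in the discussion preceding \Cref{prop:DC}: since
\[
  f(x - Tv) = (-1)^d f(v) \prod_{j=1}^d \bigl(T - \la_j^\downarrow(x)\bigr)
\]
and $f(v) \neq 0$, the hypothesis $f(x - \la(x)v) = 0$ for all $x \in \R^n$ is equivalent to the pointwise selection condition $\la(x) \in \{\la_1^\downarrow(x),\ldots,\la_d^\downarrow(x)\}$. Combined with the continuity of $\la$ and the fact that each $\la_j^\downarrow$ lies in $DC(\R^n)$ by \Cref{prop:DC}, \Cref{lem:mixing} directly yields $\la \in DC(\R^n)$.

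For the uniform global Lipschitz bound, I would set $L := \max_j \on{Lip}(\la_j^\downarrow)$, finite by \Cref{prop:DC}, and reduce to one dimension by restricting to line segments. For arbitrary $x,y \in \R^n$, set $\phi(t) := \la(x + t(y-x))$ and $\phi_j(t) := \la_j^\downarrow(x + t(y-x))$ on $[0,1]$. Then $\phi$ is continuous and DC on $[0,1]$ (inherited from the previous paragraph), hence absolutely continuous, while each $\phi_j$ is $L|y-x|$-Lipschitz with $\phi(t) \in \{\phi_j(t)\}$ at every $t$. On each closed set $A_j := \{t \in [0,1] : \phi(t) = \phi_j(t)\}$, a Lebesgue density argument shows that at a.e.\ density point of $A_j$ where $\phi$ is differentiable one has $\phi'(t) = \phi_j'(t)$; this gives $|\phi'| \le L|y-x|$ a.e.\ on $[0,1] = \bigcup_j A_j$, and integrating yields $|\la(y) - \la(x)| = |\phi(1) - \phi(0)| \le L|y-x|$.

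The hard part will be the density-point transfer in the last step: the DC conclusion by itself gives only local Lipschitz regularity of $\la$ on $\R^n$, which does not control the constant globally, while the pointwise selection alone says nothing about how $\la$ behaves when it switches between branches $\la_j^\downarrow$. The density argument is precisely what forces the derivative of $\la$ along an arbitrary line segment to agree almost everywhere with that of the selected $\la_j^\downarrow$, and it is this step that allows the uniform constant $L$ to be inherited by $\la$.
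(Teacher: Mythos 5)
Your proposal is correct. The difference-convexity step is exactly the paper's: read $f(x-\la(x)v)=0$ as the selection condition $\la(x)\in\{\la_1^\downarrow(x),\dots,\la_d^\downarrow(x)\}$ and invoke \Cref{lem:mixing} together with \Cref{prop:DC}. For the global Lipschitz bound, however, you take a genuinely different route. The paper stays combinatorial: along the segment $[x,y]$ it produces at most $d$ consecutive points whose successive pairs share a common branch index $j_i$, and then telescopes using the Lipschitz constants of the $\la_{j_i}^\downarrow$; this uses only continuity of $\la$ and the selection property, so it is logically independent of the DC step (and, as displayed there, yields a constant of order $d\cdot L$). You instead feed the DC conclusion back in: $\la$ restricted to the segment is absolutely continuous, its derivative agrees a.e.\ with the derivative of whichever branch it selects (via the Lebesgue density theorem on the closed sets $A_j$, noting also that each Lipschitz $\phi_j$ is differentiable a.e., or arguing directly with difference quotients at accumulation points of $A_j$), and integrating gives $|\la(y)-\la(x)|\le L|y-x|$ with the sharp constant $L=\max_j\on{Lip}(\la_j^\downarrow)$. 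What each approach buys: yours avoids the greedy construction of the chain of points (which the paper leaves rather implicit behind ``by assumption'') and gives the optimal constant, at the price of measure-theoretic machinery and of making the Lipschitz claim depend on the DC claim; the paper's argument is more elementary and self-contained, which also matters later, since the same chaining is reused in the uniformity statement of \Cref{thm:hyperbolic}(3).
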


\begin{proof}
	That $\la \in DC(\R^n)$ is an immediate consequence of \Cref{lem:mixing} and \Cref{prop:DC}.  
	For arbitrary $x, y \in \R^n$ consider the set $[x,y] := \{ (1-t) x + t y : t \in [0,1]\}$ equipped with the linear order induced by $[0,1]$. 
	By assumption, there is a finite sequence of points $x =: x_0  < x_1 < \cdots < x_k := y$ with $k \le d$ such that 
	for each $i = 1, \ldots, k$ there exists $j_i$ such that $\la(x_{i-1}) = \la_{j_i}^\downarrow(x_{i-1})$ and $\la(x_{i}) = \la_{j_i}^\downarrow(x_{i})$. 
	Then   
	\[
		\frac{|\la(y) - \la(x)|}{|y-x|} \le \sum_{i=1}^k \frac{|\la(x_i) - \la(x_{i-1})|}{|x_i - x_{i-1}|} \cdot \frac{|x_i - x_{i-1}|}{|y-x|}
		\le \sum_{i=1}^k \frac{|\la_{j_i}^\downarrow(x_i) - \la_{j_i}^\downarrow(x_{i-1})|}{|x_i - x_{i-1}|} 
	\]
	which implies that $\la$ is globally Lipschitz, by \Cref{prop:DC}. 
\end{proof}

Let $\on{Hyp}_n^d(v)$ denote the space of all homogeneous polynomials $f(Z_1,\ldots,Z_n) \in \R[Z_1,\ldots,Z_n]$ of degree $d$ 
which are G\r{a}rding hyperbolic with respect to a fixed vector $v \in \R^n$. 
By identifying $f\in \on{Hyp}_n^d(v)$ with the sequence of its coefficients (with respect to a fixed order of the monomials) 
we view $\on{Hyp}_n^d(v)$
as a subset of $\R^N$ where $N = \binom{n+d-1}{d}$. 
Then the subset $\mathring{\on{Hyp}}_n^d(v)$, consisting of all those $f \in \on{Hyp}_n^d(v)$ for which the roots of $f(x - T v)$ 
are simple for every $x$ not proportional to $v$, is open.
And $\on{Hyp}_n^d(v)$ is the part of the closure of $\mathring{\on{Hyp}}_n^d(v)$ where $f(v) \ne 0$.  
See \cite{Nuij68} in which also the connectivity properties of these spaces are discussed.

By a \emph{$C^{k,\al}$-mapping} $f : \R^m \to \on{Hyp}_n^d(v)$ we mean a mapping with values in $\on{Hyp}_n^d(v)$ which is of class $C^{k,\al}$ 
when considered as mapping to $\R^N$.   
For each $y \in \R^m$ we have the characteristic map $\la^\downarrow(y)(\cdot)$ of $f(y)$. By slight abuse of notation we write
\[
	\la^\downarrow : \R^m \times \R^n \to \R^d
\] 
and call it the \emph{characteristic map} of $f$ with respect to $v$. 
Its regularity properties are stated in the following

\begin{proposition} \label{prop:Lip}
	If $f : \R^m \to \on{Hyp}_n^d(v)$ is a mapping of class $C^{d-1,1}$, 
	then its characteristic map $\la^\downarrow : \R^m \times \R^n \to \R^d$ is locally Lipschitz. 	
\end{proposition}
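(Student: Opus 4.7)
The plan is to view the polynomial $T \mapsto f(y)(x-Tv)$, after a harmless normalization, as a monic univariate polynomial in $T$ whose coefficients are $C^{d-1,1}$ in the joint variable $(y,x) \in \R^m \times \R^n$, and then invoke Bronshtein's theorem directly on $\R^m \times \R^n$.

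Fix a point $y_0 \in \R^m$. Expanding $f(y)$ in monomials and collecting powers of $T$, one may write
\[
	f(y)(x - Tv) = \sum_{k=0}^d a_k(y,x)\, T^k,
\]
where each $a_k(y,x)$ is a polynomial in $x$ whose coefficients are linear combinations of the coefficients of $f(y)$ and are therefore $C^{d-1,1}$-functions of $y$. In particular $(y,x) \mapsto a_k(y,x)$ is of class $C^{d-1,1}$ on $\R^m \times \R^n$, since a product of a $C^{d-1,1}$-function in $y$ and a polynomial in $x$ is $C^{d-1,1}$ in $(y,x)$. A direct expansion shows that the leading coefficient equals $a_d(y,x) = (-1)^d f(y)(v)$, which is independent of $x$ and nonzero at $y_0$ by G\r{a}rding hyperbolicity. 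Hence there is an open neighborhood $V$ of $y_0$ on which $a_d$ is nonvanishing, so that after dividing through we obtain a monic polynomial
\[
	\tilde P_{y,x}(T) = T^d + \sum_{j=1}^d b_j(y,x)\, T^{d-j}
\]
whose coefficients $b_j : V \times \R^n \to \R$ are $C^{d-1,1}$ in $(y,x)$, because reciprocals of nonvanishing $C^{d-1,1}$-functions are $C^{d-1,1}$. By the definition of the characteristic map, the roots of $\tilde P_{y,x}$ are exactly $\la^\downarrow_1(y,x) \ge \cdots \ge \la^\downarrow_d(y,x)$, so $\tilde P_{y,x}$ is real-rooted on $V \times \R^n$.

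Since the ordered roots of a monic real-rooted polynomial depend continuously on the coefficients, the map $\la^\downarrow : V \times \R^n \to \R^d$ is continuous. Bronshtein's theorem, as recalled in the introduction, then implies that any continuous system of the roots of $\tilde P_{\cdot,\cdot}$ is locally Lipschitz on $V \times \R^n$. Hence $\la^\downarrow$ is locally Lipschitz in a neighborhood of $(y_0,x)$ for every $x \in \R^n$, and since $y_0$ was arbitrary the claim follows.

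There is no real obstacle beyond correct bookkeeping: the one point that must be verified carefully is that the normalized coefficients $b_j$ are $C^{d-1,1}$ in the \emph{joint} variable $(y,x)$, not merely $C^{d-1,1}$ in $y$ for each fixed $x$. This is immediate from the observation that the $x$-dependence is polynomial, so no further ideas beyond Bronshtein's theorem are needed.
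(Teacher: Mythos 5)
Your proof is correct and follows essentially the same route as the paper, which simply applies Bronshtein's theorem to the real-rooted polynomial $(y,x)\mapsto f(y)(x-Tv)$ with coefficients of class $C^{d-1,1}$ in the joint variable; you merely spell out the routine normalization (dividing by the nonvanishing leading coefficient $(-1)^d f(y)(v)$ and checking joint $C^{d-1,1}$ regularity) that the paper leaves implicit.
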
 

\begin{proof}
	Bronshtein's theorem \cite{Bronshtein79} (see also \cite{ParusinskiRainerHyp}) implies that the 
	roots $\la^\downarrow_j(y,x)$, $j =1,\ldots,d$ of the univariate 
	real-rooted polynomial 
	\begin{equation} \label{eq:real-rooted}
		\R^m \times \R^n \ni (y,x) \mapsto f(y)(x - Tv) \in \R[T]
	\end{equation}
	are locally Lipschitz in $(y,x)$.
\end{proof}

Note that in \Cref{prop:Lip} we need the assumption that the coefficients of $f$ are of class $C^{d-1,1}$, since the assumptions 
of Bronshtein's theorem are sharp.

By \Cref{prop:DC} the behavior of the characteristic map 
$\la^\downarrow : \R^m \times \R^n \to \R^d$, $(y,x) \mapsto \la^\downarrow(y,x)$ 
is essentially better in the $x$ variables. 
This will be further investigated in the next theorem, that is \Cref{thm:hyperbolic}. 
To this end we need the following

\begin{lemma} \label{Luzin}	
  Let $\la = (\la_1,\ldots,\la_d) : \R \to \R^d$ be of class $C^1$ and assume that the coefficients of the polynomial 
  $P(t)(Z) = \prod_{j=1}^d (Z-\la_j(t))$ are of class $W^{2,1}_{\on{loc}}$. 
  Then each derivative $\la_j'$, $j=1,\ldots,d$, has the Luzin (N) property  
  (i.e.\ sets of Lebesgue measure zero are mapped to sets of Lebesgue measure zero).
\end{lemma}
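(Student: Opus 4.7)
The plan is to differentiate the identity $P(\la_i(t),t)\equiv 0$ in $t$ in order to derive the key relation
\[
\la_i'(t)\,\Pi_i(t) \;=\; -\sum_{j=1}^d a_j'(t)\,\la_i(t)^{d-j}, \qquad \Pi_i(t):=\prod_{k\ne i}(\la_i(t)-\la_k(t)).
\]
Since the hypothesis $a_j\in W^{2,1}_{\on{loc}}$ forces $a_j'\in AC_{\on{loc}}$, and since $\la_i\in C^1$, the right-hand side lies in $W^{1,1}_{\on{loc}}$, so the product $\la_i'\,\Pi_i$ is locally absolutely continuous, while $\Pi_i$ is of class $C^1$.

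On the open set $U_i:=\{\Pi_i\ne 0\}$, dividing by the non-vanishing $C^1$ factor gives $\la_i'|_{U_i}\in W^{1,1}_{\on{loc}}(U_i)$, hence locally absolutely continuous, which yields the Luzin (N) property on $U_i$. For a null set $E\subseteq\R$, this at once handles $E\cap U_i$.

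The remaining work is on the closed complement $S_i=\{\Pi_i=0\}=\bigcup_{k\ne i}B_{ik}$ with $B_{ik}:=\{\la_i=\la_k\}$. Each $B_{ik}$ splits into a countable set of isolated zeros of $\la_i-\la_k$, whose image under the continuous function $\la_i'$ is automatically null, and its derived set $B_{ik}^d$. At every $t_0\in B_{ik}^d$ the $C^1$-function $\la_i-\la_k$ has a non-isolated zero, so $(\la_i-\la_k)'(t_0)=0$, which means $\la_i'(t_0)=\la_k'(t_0)$. To transfer Luzin (N) across such collision sets I would argue by induction on $d$, with the base case $d=1$ immediate from $\la_1=-a_1\in W^{2,1}_{\on{loc}}$. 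At $t_0\in S_i$ where the multiplicity of $\la_i(t_0)$ as a root of $P(\cdot,t_0)$ equals $m\ge 2$, invoke a Hensel-type local factorization $P=P_1P_2$ in which $P_1$, of degree $m$, collects the colliding factors $(Z-\la_j(t))$ with $\la_j(t_0)=\la_i(t_0)$, and $P_2$ collects the rest; the coprimality of $P_1(\cdot,t_0)$ and $P_2(\cdot,t_0)$ ensures (via the analytic dependence of the factorization on the coefficients through the resultant) that both $P_1$ and $P_2$ have $W^{2,1}_{\on{loc}}$ coefficients on a neighbourhood of $t_0$. When $m<d$, the inductive hypothesis applied to $P_1$ supplies Luzin (N) for $\la_i'$ near $t_0$.

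The chief obstacle is the extremal case $m=d$, in which all roots coincide at $t_0$ and induction does not reduce the degree. The natural device is to centre the roots by subtracting $\bar\la:=-a_1/d\in W^{2,1}_{\on{loc}}$, so that $\la_j(t_0)=0$ for every $j$, and then analyse such total-collision points directly, combining the simultaneous vanishing of all shifted coefficients with the prescribed $C^1$-regularity of the $\la_j$. Making this last case fully rigorous --- and in particular ruling out any pathological contribution of $\la_i'$ at points where the Vandermonde-type matrix degenerates completely --- is the most delicate point of the proof.
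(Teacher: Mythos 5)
Your overall architecture (local Hensel-type factorization plus induction on the degree, with Bourdaud-type stability of $W^{2,1}_{\on{loc}}$ under composition with the analytic factorization map) is the same as the paper's, and that part is sound. But the point you yourself flag as unresolved --- the total-collision case $m=d$ --- is not a technical loose end; it is precisely where the paper's proof does its real work, so as it stands the argument has a genuine gap. Moreover, you are aiming at something stronger than needed there: you try to ``rule out pathological contributions of $\la_i'$ \emph{near} points where the Vandermonde degenerates completely,'' i.e.\ to get Luzin (N) on a neighborhood of such points. The key observation you are missing is that Luzin (N) only requires controlling the image $\la_i'(E\cap A)$ of the part of the null set $E$ lying \emph{in} the total-collision set $A:=\{t:\la_1(t)=\cdots=\la_d(t)\}$, and on $A$ the values of $\la_i'$ are completely determined by the first coefficient: for $t\in A$ one has $\la_i(t)=-a_1(t)/d$, hence at every accumulation point $t$ of $A$ the derivative, computed as a limit of difference quotients along $A$, satisfies $\la_i'(t)=-a_1'(t)/d$. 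Since $a_1'$ is locally absolutely continuous it has Luzin (N), so $\la_i'(E\cap A')=(-1/d)\,a_1'(E\cap A')$ is null, while $A\setminus A'$ (the isolated points of $A$) is countable and contributes a countable, hence null, image. This closes exactly the case your induction cannot reach, and it is where your ``centering by $-a_1/d$'' hint should have led; you state the device but do not extract from it the identity $\la_i'=-a_1'/d$ on $A'$, which is the whole point.

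Two smaller remarks. First, your pairwise decomposition of $S_i$ into the sets $B_{ik}=\{\la_i=\la_k\}$ and the observation $\la_i'(t_0)=\la_k'(t_0)$ at their accumulation points are not needed: every point of $S_i$ at which not all roots coincide admits the factorization $P=P_1P_2$ with both degrees positive, so the induction already covers all of $\R\setminus A$ (after covering this open set by countably many such neighborhoods), and the only stratification that matters is $A$ versus its complement. Second, the explicit relation $\la_i'\,\Pi_i=-\sum_j a_j'\,\la_i^{\,d-j}$ and the division by $\Pi_i$ on $\{\Pi_i\neq0\}$ are correct but redundant, since simple roots are also handled by the factorization step; keeping them is harmless, but they do not bring you any closer to the $m=d$ case.
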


\begin{proof}
  Note that it is enough to show the Luzin (N) property on bounded intervals, since $\R$ can be covered 
  by countably many of them. 	
  We proceed by induction on $d$. The case $d=1$ is clear, since in this case the derivative of the only root 
  is locally absolutely continuous, by assumption. Let $d>1$.

  Let $A$ denote the closed set of all $t \in \R$ such that $\la_1(t) = \cdots = \la_d(t)$. 
  On an open neighborhood $I$ of each point $t_0 \in A^c := \R \setminus A$ we have a splitting 
  $P(t) = P_1(t) P_2(t)$ into polynomials with positive degree 
  and $W^{2,1}_{\on{loc}}$ coefficients, 
  by the inverse function theorem (cf.\ \cite[Lemma 3.2]{ParusinskiRainerHyp});  
  here we use that the superposition with a real analytic function preserves the class $W^{2,1}_{\on{loc}}$, cf.\ 
  \cite[Theorem 2]{Bourdaud:1991aa}. 
  On $I$ we have a partition of $\{\la_1,\ldots,\la_d\}$ into the roots of $P_1$ and the roots of $P_2$.
  By induction hypothesis, 
  for each $j$ the restriction $\la_j'|_I$ has the Luzin (N) property.
  Since $A^c$ can be covered by countably many open neighborhoods of type $I$, we 
  see that each $\la_j'|_{A^c}$ has the Luzin (N) property. 

  Let $A'$ denote the set of accumulation points of $A$. 
  Let $a_1(t)$ denote the coefficient of $Z^{d-1}$ in $P(t)$.
  For $t \in A$ we have 
  \[
  	a_1(t) = - \la_1(t) - \cdots - \la_d(t) =  - d\cdot  \la_j(t), \quad \text{ for all } j. 
  \]
  And hence for $t \in A'$ and all $j$ we have 
  \[
	\la_j'(t) = \lim_{A \ni t_n \to t} \frac{\la_j(t_n) - \la_j(t)}{t_n - t} =  
	- \frac{1}{d}\lim_{A \ni t_n \to t} \frac{a_1(t_n) - a_1(t)}{t_n - t} = - \frac{a_1'(t)}{d}. 	
  \]
  By assumption $a_1'$ is absolutely continuous and hence has the Luzin (N) property. 
  If $E \subseteq \R$ is a set of Lebesgue measure zero, then also
  \begin{align*}
    \la_j'(E)  
    &= \la_j'(E\cap A^c) \cup \la_j'(E \cap A') \cup \la_j'(E \cap A \setminus A') 
    \\
    &= \la_j'(E\cap A^c) \cup  (-\tfrac {1} d) a_1'(E \cap A') \cup \la_j'(E \cap A \setminus A')
  \end{align*}
  has Lebesgue measure zero, noting that $A \setminus A'$ is discrete and thus countable.  
\end{proof}

Let $f \in \on{Hyp}_n^d(v)$ have characteristic map $\lambda^\downarrow : \R^n \to \R^d$. 
Let $x : \R \to \R^n$ be a curve. 
A mapping $\la = (\la_1,\ldots,\la_d) : \R \to \R^d$ 
is called a \emph{system of the roots of $f$ along $x$} if 
$\la(t)$ and $\la^\downarrow(x(t))$ coincide as unordered $d$-tuples for all $t$. 

\begin{lemma} \label{lem:differentiable}
	Let $f \in \on{Hyp}^d_n(v)$
	and let $x:\R \to \R^n$ be a $C^1$-curve.
	Then there exists a differentiable system $\la = (\la_1,\ldots,\la_d)$ of the roots of $f$ along $x$. 	
\end{lemma}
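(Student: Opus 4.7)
My strategy is to linearise the problem at each $t_0 \in \R$ and compare with an analytic reference parametrisation. Fix $t_0 \in \R$ and set $x_0 := x(t_0)$, $v_0 := x'(t_0)$, so that $x(t) = x_0 + (t - t_0) v_0 + o(t - t_0)$. Consider the polynomial
\[
G(s, Z) := f(x_0 + s v_0 - Z v) \in \R[s, Z];
\]
its coefficients in $Z$ are polynomials in $s$ (hence real-analytic), and $G(s, \cdot) \in \R[Z]$ is real-rooted for every $s \in \R$ by hyperbolicity of $f$ with respect to $v$. The classical analytic parametrisation of roots of real-rooted polynomials with analytic coefficients then yields real-analytic functions $\nu_1, \dots, \nu_d : \R \to \R$ with $G(s, Z) = c \prod_{j=1}^d (Z - \nu_j(s))$; this ultimately rests on the observation that a Puiseux expansion at a coincidence point with non-integer exponents would produce complex roots on one side, contradicting real-rootedness, so each local Puiseux series must actually be a Taylor series.

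Next I would match the roots of $f(x(t) - Zv)$ to the reference roots $\nu_j(t - t_0)$. By the global Lipschitz property of the characteristic map (\Cref{prop:DC}) applied to $x(t)$ and to $\tilde x(t) := x_0 + (t - t_0) v_0$, the multiset of roots of $f(x(t) - Zv)$ lies within bottleneck distance $L \, |x(t) - \tilde x(t)| = o(t - t_0)$ of $\{\nu_j(t - t_0)\}_{j=1}^d$. Hence for each $j$ one can select a root $\mu_j(t)$ of $f(x(t) - Zv)$ with $|\mu_j(t) - \nu_j(t - t_0)| = o(t - t_0)$; the selection can be made continuous and bijective in $t$, since where the $\nu_j$ separate to first order the separation dominates the matching error for small $|t - t_0|$, and where several $\nu_j$ coincide identically the choice within that group is irrelevant for the limiting derivative. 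With $\mu_j(t_0) := \nu_j(0)$ we then have
\[
\frac{\mu_j(t) - \mu_j(t_0)}{t - t_0} \;=\; \frac{\mu_j(t) - \nu_j(t - t_0)}{t - t_0} \;+\; \frac{\nu_j(t - t_0) - \nu_j(0)}{t - t_0} \;\longrightarrow\; \nu_j'(0),
\]
so $\mu_j$ is differentiable at $t_0$ with $\mu_j'(t_0) = \nu_j'(0)$, producing a differentiable system of roots in a neighbourhood of $t_0$.

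To pass from these local constructions to a global differentiable system on $\R$ I would glue as follows. Off the closed set $\Si := \{t \in \R : f(x(t) - Zv) \text{ has a multiple root}\}$, the ordered characteristic roots $\la^\downarrow_j(x(t))$ are $C^1$ by the implicit function theorem, so any continuous labelling is already differentiable there; at each $t_0 \in \Si$ the local construction above prescribes the correct rebraiding across $t_0$. The one-dimensionality of $\R$ allows these local labellings to be concatenated via locally constant permutations on the intervals between consecutive points of $\Si$, yielding a global continuous system $(\la_1, \dots, \la_d)$ that is differentiable at every $t \in \R$. The principal technical ingredient is the Rellich-type analytic parametrisation of the tangent pencil $G$; the matching and the gluing are then routine consequences of \Cref{prop:DC} and of working in one parameter.
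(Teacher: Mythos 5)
Your local step is correct and is a genuinely different route from the paper's: you compare the roots along $x$ with the real-analytic root functions of the tangent pencil $G(s,Z)=f(x_0+sv_0-Zv)$ (Rellich's theorem, which the paper also invokes elsewhere) and transfer first-order information via the Lipschitz bound of \Cref{prop:DC}, whereas the paper works with the localization $f_{x(0)}$ of $f$ at $x(0)$ (a hyperbolic polynomial of degree $p$, by Atiyah--Bott--G\r{a}rding) and obtains directly a cluster of $p$ roots of the form $\la_0+(t-t_0)\mu_j(t)$ with $\mu_j$ continuous at $t_0$. One slip in your matching step: the alternatives are not ``separate to first order'' versus ``coincide identically''; two reference roots $\nu_i,\nu_j$ can agree at $s=0$ together with their first derivatives and yet be distinct, separating at order $\ge 2$, which is smaller than the $o(s)$ matching error, so the assignment inside such a cluster is genuinely ambiguous. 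What you need (and what is true) is that within a cluster of $\nu_j$'s agreeing to first order at $0$ any admissible selection has the same difference quotient limit, so differentiability at $t_0$ with derivative $\nu_j'(0)$ is unaffected; you should state the cluster condition as first-order contact, not identical coincidence.

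The genuine gap is the globalization. The collision set $\Si$ is merely a closed subset of $\R$ and in general has accumulation points: already for $d=2$ and the Lorentzian polynomial the roots along $x$ are $x_1(t)\pm\sqrt{x_2(t)^2+x_3(t)^2}$, and the zero set of the $C^1$ curve $(x_2,x_3)$ can be an arbitrary closed set. Hence there are no ``consecutive points of $\Si$'', and your concatenation of locally constant permutations on complementary intervals is not even defined, let alone shown to be consistent: at an accumulation point of $\Si$ one must verify that a labelling assembled from infinitely many independent local prescriptions (with arbitrary choices at tangential collisions) is still differentiable there. This is precisely the part to which the paper devotes the second half of its proof, following \cite[4.3]{AKLM98}: for each pair $i\ne j$ one isolates the set $E_{ij}$ of collision points where the two ordered roots meet with equal derivatives (there every labelling is automatically differentiable), observes that the remaining, forced crossings accumulate only in $E_{ij}$, and then toggles a transposition exponent $\ep_{ij}$ along each connected component of $\R\setminus E_{ij}$. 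Your proposal needs this (or an equivalent) consistency mechanism; as written, the assertion that the gluing is a routine consequence of one-dimensionality does not hold.
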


\begin{proof}
	Let $\la_0$ be a $p$-fold root of $f(x(t_0)-Tv)$. 
	We claim that the polynomial $f(x(t)-Tv)$ has $p$ roots of the form $\la_0 + (t-t_0) \mu_j(t)$, where the $\mu_j$ are functions which are 
	defined near $t_0$ and continuous at $t_0$. 

	We may assume without loss of generality that $t_0=0$. 
	In view of \eqref{eq:homogeneous} we may also assume that $\la_0 =0$. 
	We consider the localization $f_{x(0)}$ of $f$ at $x(0)$ defined by 
	\begin{equation} \label{eq:localization}
		t^d f(t^{-1} x(0) + \xi) = f(x(0) + t \xi) = t^p f_{x(0)}(\xi) + O(t^{p+1}),
	\end{equation}
	where $f_{x(0)}(\xi)$ is the first coefficient in the above expansion which does not vanish identically. 
	Since 
	\[
		f(x(0) + t v) = f(v) \prod_{j=1}^d (t + \la^\downarrow_j(x(0))) =  t^p f(v) \prod_{\la^\downarrow_j(x(0)) \ne \la_0} (t + \la^\downarrow_j(x(0))),  
	\]
	the exponent $p$ in \eqref{eq:localization} is precisely the multiplicity of $\la_0$.  
	The homogeneous polynomial $f_{x(0)}$ has degree $p$ and is G\r{a}rding hyperbolic with respect to $v$; see \cite[Lemma 3.42]{ABG70}.

	Since $x$ is of class $C^1$, we have $x(t) = x(0) + t \tilde x(t)$ for a continuous function $\tilde x$. We are interested in the solutions of the equation
	\[
		f(x(0) + t \tilde x(t) - t Y v) = 0
	\]
	By \eqref{eq:localization} we may equivalently study the equation	
	\[
		f_{x(0)}(\tilde x(t) - Y v) + O(t) = 0.
	\] 
	This is a polynomial equation in the unknown $Y$ with coefficients which depend continuously on $t$ in a neighborhood of $0$.  
	It follows that there exist $p$ solutions $\mu_1(t), \ldots,\mu_p(t)$ (since $p$ is the degree of $f_{x(0)}$) for $t$ near $0$ 
	which are continuous at $t=0$. This implies the claim. 

	Let us indicate how to construct a global differentiable system $\la = (\la_1,\ldots,\la_d)$ 
	of the roots of $f$ along $x$ from the local data 
	provided by the claim; we follow the argument in \cite[4.3]{AKLM98}.	
	We start with the continuous system $\nu_1 \ge \nu_2 \ge \cdots \ge \nu_d$, where $\nu_j := \la^\downarrow_j \o x$. 
	Fix $i \ne j$ and $t_0 \in \R$. 
	If $\nu_i(t_0) = \nu_j(t_0)$, then, by the claim, 
	$\nu_i(t) - \nu_j(t) = (t-t_0) \nu_{ij}(t)$ near $t_0$, where $\nu_{ij}$ is a function which is continuous at $t_0$.
	Let $E_{ij}$ be the closed set of $t_0 \in \R$ such that $\nu_i(t_0) = \nu_j(t_0)$ and $\nu_{ij}(t_0) =0$.
	At the points of $E_{ij}$ the roots $\nu_i$ and $\nu_j$ are differentiable and the derivatives $\nu_i'$ and $\nu_j'$ coincide. 
	Let us define 
	\[
		\la_k(t) := \nu_{\si_t(k)}(t), \quad k = 1,\ldots,d,  
	\]  
	where $\si_t$ is the permutation
	\[
		\si_t = (1\; 2)^{\ep_{12}(t)} \cdots (1\; d)^{\ep_{1d}(t)} (2\; 3)^{\ep_{23}(t)} \cdots (2\; d)^{\ep_{2d}(t)} \cdots (d-1\; d)^{\ep_{d-1,d}(t)}
	\]
	and $\ep_{ij}(t) \in \{0,1\}$ are determined as follows. 
	In each connected component $I$ of $\R \setminus E_{ij}$ choose a point $t_I$ and set $\ep_{ij}(t_I) =0$. 
	Going left and right from $t_I$ change $\ep_{ij}$ in each point $t_0 \in I$, where $\nu_i(t_0) = \nu_j(t_0)$ and $\nu_{ij}(t_0) \ne 0$ 
	(including $t_I$ if it is such a point). 
	These points accumulate only in $E_{ij}$, where any choice for $\ep_{ij}$ is good. 
	It follows that $\la = (\la_1,\ldots,\la_d)$ is a global differentiable system of the roots of $f$ along $x$.  
\end{proof}

We are ready to state and prove the theorem.

\begin{theorem} \label{thm:hyperbolic}
	Let $f \in \on{Hyp}_n^d(v)$. Let $x \in DC(\R, \R^n) \cap W^{2,1}_{\on{loc}}(\R, \R^n)$; 
	this is, for instance, fulfilled if $x \in C^{1,1}(\R,\R^n)$.  
	\begin{enumerate}
		\item Any differentiable system $\la = (\la_1,\ldots,\la_d)$ of the roots of $f$ along $x$ is actually of class 
		\begin{equation} \label{eq:hyperbolic}
			\la \in C^1(\R,\R^d) \cap DC(\R,\R^d)\cap  W^{2,1}_{\on{loc}}(\R,\R^d).	
		\end{equation}
		\item If $x : \R \to \R^n$ is of class $C^1$, 
		then there exists a differentiable (thus $C^1$) system $\la = (\la_1,\ldots,\la_d)$ of the roots of $f$ along $x$. 
		\item The result is uniform in the following sense:
		Let $I \subseteq \R$ be a bounded open interval.
		Let $U$ be an open neighborhood of  the closure of $I^{1+k}$ in $\R^{1+k}$.
		Suppose that $x : U \to \R^n$ is such that 
		\begin{itemize}
			\item $x$ is locally DC on $U$,
			\item $x( \cdot,r) \in C^1(\ol I,\R^n) \cap W^{2,1}(I,\R^n)$ for all $r \in \ol I^k$.
		\end{itemize}
		Assume that, for each $r \in \ol I^k$, a $C^1$-system $\la(\cdot,r)$ of the roots of $f$ along $x(\cdot,r)$ is fixed. 
		Then the family 
		$\la(\cdot,r)$, for $r \in \ol I^k$, is bounded in $C^1(\ol I, \R^d)$ and there is a non-negative 
		$L^1$-function $m : I^k \to \R_{\ge 0}$ such that  
		\begin{equation} \label{eq:uniform}
			\|\la(\cdot, r)\|_{W^{2,1}(I,\R^d)} \le m(r), \quad \text{ for a.e.\ } r \in I^k.
		\end{equation}
	\end{enumerate} 
\end{theorem}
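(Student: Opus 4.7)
The plan is to show that each component $\la_j$ of a differentiable system is successively DC, then $C^1$, then $W^{2,1}_{\on{loc}}$. Since $\la^\downarrow \in DC(\R^n, \R^d)$ by \Cref{prop:DC} and $x \in DC(\R, \R^n)$ by hypothesis, parts (1) and (2) of \Cref{properties2} give $\la^\downarrow \circ x \in DC(\R, \R^d)$. Each $\la_j$ is continuous (being differentiable) and at every $t$ coincides with some $\la^\downarrow_k(x(t))$, so \Cref{lem:mixing} forces $\la_j \in DC(\R)$. Hence $\la_j'$ exists almost everywhere and is of bounded variation; since $\la_j$ is differentiable everywhere, a mean-value-theorem argument identifies $\la_j'(t_0)$ with both one-sided limits of the BV function $\la_j'$ at each $t_0$, forcing $\la_j' \in C^0$ and thus $\la_j \in C^1(\R)$. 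To upgrade to $W^{2,1}_{\on{loc}}$ I would apply \Cref{Luzin}: the coefficients of $\prod_j (Z - \la_j(t))$ are polynomials in $x(t)$, hence lie in $W^{2,1}_{\on{loc}}$ (using the one-dimensional embedding $W^{2,1}_{\on{loc}} \hookrightarrow C^1_{\on{loc}}$ together with the chain rule), so $\la_j'$ enjoys the Luzin (N) property. A continuous BV function with Luzin (N) is absolutely continuous (Banach--Zaretskii), giving $\la_j' \in AC_{\on{loc}}$, i.e.\ $\la_j \in W^{2,1}_{\on{loc}}(\R, \R^d)$.

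\textbf{Part (2).} This is immediate from \Cref{lem:differentiable} (which produces a differentiable system along the $C^1$-curve $x$) combined with the $C^1$-conclusion of part (1).

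\textbf{Part (3).} For the uniform $C^1$-bound: the locally DC map $x$ is Lipschitz on the compact set $\ol I^{1+k} \subset U$ with some constant $L$ (via a finite cover by DC-neighborhoods), while \Cref{prop:DC} gives that $\la^\downarrow$ is globally Lipschitz on $\R^n$ with some constant $C$. The chain argument from the proof of \Cref{cor:DC}, applied on $\ol I$ with $r$ held fixed, then yields $\on{Lip}(\la_j(\cdot, r), \ol I) \le CL$ uniformly in $r$, and boundedness of $\la^\downarrow \circ x$ on $\ol I^{1+k}$ supplies the sup-bound. For the per-slice $W^{2,1}(I)$-estimate I would combine part (1) with a quantitative form of the Bronshtein/Luzin argument in the spirit of \cite{ParusinskiRainerHyp} to bound $\|\la_j''(\cdot, r)\|_{L^1(I)} = \on{TV}_I(\la_j'(\cdot, r))$ in terms of $\|x(\cdot, r)\|_{W^{2,1}(I)}$ and the uniform $C^0$-norm of $x$. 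The final $L^1$-integrability of the resulting $m(r)$ would follow from a disintegration/Fubini argument applied to $\partial_{tt}x$: being the second distributional derivative of a locally DC function on $U$, it is a locally finite signed Radon measure on $U$, and its per-slice absolute continuity (ensured by the per-slice $W^{2,1}$-hypothesis) yields $r \mapsto \|x_{tt}(\cdot, r)\|_{L^1(I)} \in L^1(I^k)$.

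\textbf{Main obstacle.} The hardest step is the last one in part (3): translating the pointwise (in $r$) $W^{2,1}$-estimate into an integrable bound $m \in L^1(I^k)$. This requires coupling the per-slice $W^{2,1}$-hypothesis on $x(\cdot, r)$ with the joint-regularity consequences of the local DC assumption on $U$ (namely that $\partial_{tt}x$ is a locally finite Radon measure on $U$), and carrying out a disintegration/Fubini argument compatible with the slicing in $r$. Once this is in place, the quantitative Bronshtein/Luzin bound on the roots takes care of the rest.
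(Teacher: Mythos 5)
Your parts (1) and (2) are correct and essentially follow the paper's own route: DC-ness of $\la^\downarrow \o x$ via \Cref{properties2}, DC-ness of each $\la_j$ via the mixing argument, continuity of $\la_j'$ from everywhere-differentiability plus bounded variation (your mean-value-theorem argument is a harmless variant of the paper's appeal to Darboux), and then \Cref{Luzin} plus the Banach--Zaretskii criterion to get $W^{2,1}_{\on{loc}}$; part (2) is indeed just \Cref{lem:differentiable}. The uniform $C^1$-bound in part (3) also matches the paper.

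The genuine gap is the middle step of your part (3): you invoke ``a quantitative form of the Bronshtein/Luzin argument in the spirit of \cite{ParusinskiRainerHyp}'' to bound $\on{TV}_I(\la_j'(\cdot,r))$ by $\|x(\cdot,r)\|_{W^{2,1}(I)}$ and the sup-norm of $x$. No such estimate is available from those sources: Bronshtein's theorem and \cite{ParusinskiRainerHyp} give first-order (Lipschitz) bounds on the roots in terms of $C^{d-1,1}$-norms of the coefficients, and \Cref{Luzin} is purely qualitative (the Luzin (N) property), so as written this key inequality is an unproved claim, and the whole point of the theorem is precisely such a second-order bound. The paper avoids the issue entirely: since $\la^\downarrow \o x$ is locally DC on $U$ \emph{jointly} in $(t,r)$, the directional variation $V_{e_1}(\p_1(\la^\downarrow\o x), I^{1+k})$ is finite, and the slicing formula \cite[(3.100)]{AFP00} writes it as $\int_{I^k} V(\p_1(\la^\downarrow\o x)(\cdot,r),I)\,dr$; this integrand is automatically an $L^1$-majorant (up to a $d$-dependent constant) of $V(\p_1\la_j(\cdot,r),I)$ by the value-set/continuity argument, so no bound in terms of the norms of $x(\cdot,r)$ is ever needed. (Your route could in principle be repaired without Bronshtein: writing $\la_j^\downarrow=\si_j-\si_{j-1}$ with $\si_k$ convex and globally Lipschitz with constant $L_k$, one has $(\si_k\o x(\cdot,r))''\ge -L_k|x_{tt}(\cdot,r)|$ as measures, whence $\on{TV}_I((\si_k\o x)'(\cdot,r))\le 2L_k\|x_t\|_{\infty}+CL_k\|x_{tt}(\cdot,r)\|_{L^1(I)}$, and your Fubini/slicing step for $\p_{tt}x$, which is correct and is the same \cite{AFP00} tool applied to $x$ instead of $\la^\downarrow\o x$, then yields an $L^1$ majorant --- but this argument is not in your proposal and must be supplied.)
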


\begin{proof}
	(1) Let $\la = (\la_1,\ldots,\la_d)$ be a differentiable system of the roots of $f$ along $x$. 
	By \Cref{properties2}, $\la^\downarrow \o x$ is $DC$ on $\R$.
	Then $\la$ is $DC$ on $\R$, by \Cref{cor:DC}. 
	A differentiable $DC$-function $\vh$ on $\R$ is of class $C^1$ and the difference of two convex $C^1$-functions (see \cite[Theorem 2.1]{Hiriart-Urruty:1985aa}); 
	indeed, by Darboux's theorem, $\vh'$ has the intermediate value property and so $\vh'$ having (locally) bounded variation implies that $\vh'$ is continuous. 
	It follows that $\la$ is of class $C^1$.

	In order to conclude that $\la  \in W^{2,1}_{\on{loc}}(\R,\R^n)$ it suffices 
 	to show that each $\la_j'$ has the Luzin (N) property which follows from Lemma \ref{Luzin}; 
 	in fact a continuous function of pointwise bounded variation with the Luzin (N) property is absolutely continuous, cf.\ 
 	\cite[Corollary 3.27]{Leoni09}.

 	(2) This is \Cref{lem:differentiable}.

	(3)	The Lipschitz constant of $(\lambda^\downarrow \o x)( \cdot,r)$ on $\ol I$ is uniformly bounded in $r \in \ol I^k$ by the chain rule, 
	 	since $\lambda^\downarrow$ is Lipschitz, by \Cref{prop:DC}. By an argument similar to the one in the proof of \Cref{cor:DC} 
	 	we may infer the boundedness of the Lipschitz constant of $\la(\cdot,r)$ on $\ol I$. 
	 	It follows that the family $\la(\cdot,r)$, $r \in \ol I^k$, is bounded in $C^1(\ol I,\R^d)$.

		By \Cref{properties2}, $\la^\downarrow \o x$ is locally $DC$ on $U$. 
 		In particular, the directional variation $V_{e_1}(\p_{1}(\la^\downarrow \o x), I^{1+k})$ of $\p_{1} (\la^\downarrow \o x)$ on $I^{1+k}$ is finite, where $e_1$ denotes 
 		the first standard unit vector. We have, see e.g \cite[(3.100)]{AFP00},
 		\[
 			V_{e_1}(\p_{1}(\la^\downarrow \o x), I^{1+k}) = \int_{I^k} V(\p_1 (\la^\downarrow \o x)(\cdot,r), I)\, dr,
 		\]
 		where $V(\p_1 (\la^\downarrow \o x)(\cdot,r), I)$ is the variation of $\p_1 (\la^\downarrow \o x)(\cdot,r)$ on $I$. 
 		The variation of $\p_1 (\la^\downarrow \o x)(\cdot,r)$ on $I$ dominates the variation of $\p_1 \la(\cdot,r)$ on $I$ (up to a constant which depends only on $d$), since 
 		for each $j = 1,\ldots, d$ and a.e.\ $t$ we have 
 		$$\p_1 \la_j(t,r) \in \{\p_1 (\la_1^\downarrow \o x)(t,r),\ldots, \p_1 (\la_d^\downarrow \o x)(t,r)\},$$ 
 		and $\p_1 \la_j(t,r)$ is continuous in $t$. 
 		Since $\p_1 \la_j(\cdot,r)$ is absolutely continuous on $I$, by (1), its $W^{1,1}$-norm coincides with its $BV$-norm on $I$. 
 		Together with the boundedness in $C^1(\ol I,\R^d)$ the assertion \eqref{eq:uniform} follows easily.
\end{proof}

\begin{remark} 
	If the system $\la = (\la_1,\ldots,\la_d)$ of the roots of $f$ along $x$ is not differentiable at $t_0$, then 
		$\la$ is not of class $W^{2,1}$ in any neighborhood of $t_0$. 
\end{remark}

In several variables the characteristic roots do not admit differentiable rearrangements.  
But for real analytic families of G\r{a}rding hyperbolic polynomials the characteristic roots can be chosen 
real analytically locally after blowing up the parameter space.

\begin{proposition} \label{prop:RA}
		If $f : \R^m \to \on{Hyp}_n^d(v)$ is real analytic and $x : \R^\ell \to \R^n$ is real analytic, 
		then there exists a locally finite composite of blowing-ups with smooth centers $\ta=(\ta_1,\ta_2) : Y \to \R^m \times \R^\ell$ 
		such that 
		for each $y_0 \in Y$ there is a neighborhood $U$ and a real analytic system of the roots 
		of the univariate polynomial $$U \ni y \mapsto f(\ta_1(y))(x(\ta_2(y)) - T v) \in \R[T].$$	
		If $f : \R \to \on{Hyp}_n^d(v)$ is real analytic and $x : \R \to \R^n$ is a real analytic curve, 
		then there exists a global real analytic system of the roots of 
		$$\R \ni s \mapsto f(s)(x(s) - T v) \in \R[T].$$
\end{proposition}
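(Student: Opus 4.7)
The plan is to reduce both assertions to known results on real analytic families of univariate real-rooted polynomials. Set
\[
	P(a,b)(T) := f(a)(x(b) - Tv) \in \R[T].
\]
Its coefficients depend real analytically on $(a,b) \in \R^m \times \R^\ell$, the leading coefficient $(-1)^d f(a)(v)$ in $T$ is nowhere zero, and since $f(a) \in \on{Hyp}_n^d(v)$ the polynomial $P(a,b)(T)$ is real-rooted of degree exactly $d$. Finding a real analytic system of roots of the polynomial appearing in the proposition is therefore equivalent to finding one for the real analytic family of monic real-rooted polynomials obtained from $P$ after dividing out the leading coefficient.

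For the multiparameter statement, I would invoke the desingularization theorem for real analytic families of monic real-rooted polynomials (Kurdyka--P\u{a}unescu, and related work of Rainer and of Parusi\'nski--Rainer): there exists a locally finite composite $\sigma : Y \to \R^m \times \R^\ell$ of blowing-ups with smooth centers such that the pulled-back polynomial $P(\sigma(y))(T)$ admits a real analytic system of roots on a neighborhood of every $y_0 \in Y$. Setting $\ta := \sigma$ and letting $\ta_1, \ta_2$ be its compositions with the canonical projections of $\R^m \times \R^\ell$ to its factors yields the required map.

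For the one-parameter statement ($m = \ell = 1$) I would argue directly, along Rellich's lines. The discriminant $\De(s)$ of $P(s)(T)$ is a real analytic function of $s \in \R$. If $\De \equiv 0$, then $P$ and $\p_T P$ share a nontrivial real analytic greatest common divisor on $\R$, which produces a global real analytic factorization $P(s)(T) = P_1(s)(T)\, P_2(s)(T)$ into two polynomials of positive degree, each still real-rooted for every $s$; an induction on $d$ then applies. If $\De \not\equiv 0$, its zero set $Z$ is discrete, and outside $Z$ the roots are locally real analytic by the implicit function theorem. At each $s_0 \in Z$ the Puiseux expansions of the root germs converge on some punctured complex disk around $s_0$; complex conjugation permutes the branches, and because $P(s)$ is real-rooted for real $s$ near $s_0$, each branch must equal its own conjugate. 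This forces every Puiseux exponent to be an integer and shows that the roots extend real analytically across $s_0$. A global real analytic system is then assembled by the combinatorial permutation argument used in the proof of \Cref{lem:differentiable}.

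The main obstacle is the desingularization step in part one, which rests on Hironaka-type work applied to the structure of the discriminant locus of a real analytic real-rooted family; once that theorem is granted the remainder is essentially bookkeeping. In the one-parameter case the subtle point is the Puiseux-exponent argument at branch points of $\De$, which uses in an essential way that $P$ takes real-rooted values, and not merely that its coefficients are real.
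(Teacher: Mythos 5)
Your proposal is correct and follows essentially the same route as the paper: the multiparameter statement is obtained, exactly as in the paper, by applying the Kurdyka--Paunescu desingularization theorem \cite{KurdykaPaunescu08} to the real analytic real-rooted family $(a,b)\mapsto f(a)(x(b)-Tv)$ (normalized by the nonvanishing leading coefficient $(-1)^d f(a)(v)$, which is legitimate since $f(a)(v)\neq 0$ for hyperbolic $f(a)$). For the one-parameter supplement the paper simply cites Rellich's theorem \cite{Rellich37I} (see also \cite[Theorem 5.1]{AKLM98}), whereas you reprove it via the discriminant/Puiseux-exponent argument; that is a correct, standard proof of the cited result rather than a genuinely different approach.
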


\begin{proof}
	The statement follows from \cite[Theorem 5.8]{KurdykaPaunescu08} applied to the real-rooted polynomial 
	\eqref{eq:real-rooted}. The (one-dimensional) supplement follows from Rellich's theorem \cite{Rellich37I}; see also \cite[Theorem 5.1]{AKLM98}.
\end{proof}

\begin{remark}
	Analogous results follow for suitable quasianalytic classes thanks to a corresponding theorem for 
	real-rooted polynomials, see \cite{RainerQA}. 
\end{remark}

\begin{remark}
	It is shown in \cite[Theorem 4.2]{Harvey:2013aa} that for any $f \in \on{Hyp}_n^d(v)$, $x_0 \in \R^n$, and $w \in C_f$ 
	there is a system $\la= (\la_1,\ldots,\la_d) : \R \to \R^d$ of the roots of $f$ along $t \mapsto x_0 + t w$ which is 
	real analytic and such that each $\la_j : \R \to \R$ is strictly increasing and surjective. The inverses of the $\la_j$ 
	form a system of the characteristic roots of $f  \in \on{Hyp}_n^d(w)$ along $t \mapsto x_0 + t v$. 
\end{remark}

\subsection{Roots of real stable polynomials}

The regularity results for G\r{a}rding hyperbolic polynomials of the previous section immediately give regularity results 
for real stable polynomials thanks to the connection presented in \Cref{realstablehyperbolic}.
Let us denote by $\on{RStab}^d_n$ the set of real stable polynomials of degree $d$ in $n$ variables.
Then \Cref{realstablehyperbolic} defines a bijection 
\[
	\on{RStab}^d_n \to \bigcap_{v \in \R^n_{>0} \times \{0\}} \on{Hyp}^d_n(v), \quad f \mapsto f_H.
\]	
Suppose that $f \in \on{RStab}^d_n$. For all $v \in \R^n_{>0} \times \{0\}$ 
we may consider the characteristic map $\la^\downarrow_v : \R^{n+1} \to \R^d$ of $f_H$ with respect to $v$.  
Then the mapping $\la^\downarrow_v|_{\{x_{n+1}=1\}}$ is globally Lipschitz and difference-convex and its 
components represent the roots of $f(x' - T v') \in \R[T]$, where $x' = (x_1,\ldots,x_n)$ and $v'=(v_1,\ldots,v_n)$. 
It is then straightforward to transfer the regularity results for G\r{a}rding hyperbolic polynomials of  
the previous section.

Finally, note that we can also vary the direction $v$: 
if $f  : \R^m \to \on{RStab}^d_n$ \emph{and}
$v : \R^\ell \to \R^n_{>0}$ are $C^{d-1,1}$-mappings, 
then the roots of the univariate real-rooted polynomial 
\[
	\R^m \times \R^\ell \times \R^n \ni (s,t,x) \mapsto f(s)(x - T v(t)) \in \R[T]	
\]
are locally Lipschitz in $(s,t,x)$, by Bronshtein's theorem \cite{Bronshtein79} (see also \cite{ParusinskiRainerHyp}).

\subsection{Eigenvalues of Hermitian matrices}

Let $\on{Herm}(d)$ denote the real vector space of complex $d \times d$ Hermitian matrices. 
The determinant $\det$ is G\r{a}rding hyperbolic with respect to the identity matrix $\mathbb I \in \on{Herm}(d)$. 
The characteristic map $\la^\downarrow$ of $\det$ with respect to $\I$ assigns to a Hermitian matrix $A \in \on{Herm}(d)$ its eigenvalues in 
decreasing order
\[
	\la^\downarrow_1(A) \ge \ldots \ge \la^\downarrow_d(A).
\] 
As a direct consequence of \Cref{prop:DC} and the discussion preceding it we get

\begin{corollary} \label{cor:HermLip}
	The characteristic map 
	$\la^\downarrow : \on{Herm}(d)  \to \R^d$ 
	is globally Lipschitz and difference-convex on $\on{Herm}(d)$. 
	The sum $\sum_{i=1}^k \la^\downarrow_i$, for $k = 1,\ldots,d$, of the $k$ largest eigenvalues is sublinear. 
\end{corollary}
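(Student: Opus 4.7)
The plan is to apply Proposition~\ref{prop:DC} directly to the determinant, viewed as a polynomial on the real vector space $\on{Herm}(d) \cong \R^{d^2}$. The example just before the corollary already records that $f = \det$ is a homogeneous polynomial of degree $d$ on $\on{Herm}(d)$ that is G\r{a}rding hyperbolic with respect to the identity matrix $\I$, so the only point I need to verify is that the characteristic map of $\det$ with respect to $\I$ is precisely the map sending $A \in \on{Herm}(d)$ to its decreasingly ordered eigenvalues. This is tautological: by definition of the characteristic map one has $\det(A - T\I) = \det(\I) \prod_j (T + \la_j^\downarrow(A)) = \prod_j (T + \la_j^\downarrow(A))$, so the $-\la_j^\downarrow(A)$ run through the roots of the characteristic polynomial of $A$, and the sign/order convention matches the one in the statement.

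Given that identification, the first assertion of the corollary is exactly the content of Proposition~\ref{prop:DC}: the characteristic map of a G\r{a}rding hyperbolic polynomial is globally Lipschitz and difference-convex on its (linear) domain.

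For the sublinearity of $\si_k := \sum_{i=1}^k \la_i^\downarrow$, I would invoke the paragraph immediately preceding Proposition~\ref{prop:DC}. Specialized to $f = \det$ and $v = \I$, the construction using the symmetric G\r{a}rding hyperbolic polynomial $g_k$ realizes $\tfrac{1}{k}\si_k$ as the largest characteristic root of the G\r{a}rding hyperbolic polynomial $g_k \circ \la^\downarrow$ with respect to $\I$. Since the largest characteristic root of any G\r{a}rding hyperbolic polynomial is positively homogeneous and convex by G\r{a}rding's theorem, it is sublinear; hence so is $\si_k$.

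No genuine obstacle is expected: the corollary is a dictionary entry translating the structural results of the previous subsection into the language of Hermitian matrices, and the argument consists entirely of identifying $(\det, \I)$ as an instance of the hyperbolic framework and citing Proposition~\ref{prop:DC} together with the sublinearity discussion that precedes it.
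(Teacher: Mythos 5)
Your proposal is correct and is exactly the paper's argument: the corollary is presented as a direct consequence of \Cref{prop:DC} and the preceding discussion of the sublinear sums $\si_k$ (via the polynomials $g_k$), specialized to $f=\det$ and $v=\I$. One cosmetic slip: with the paper's convention $f(x+Tv)=f(v)\prod_j(T+\la^\downarrow_j(x))$, the $\la^\downarrow_j(A)$ are the roots of $\det(A-T\I)$, so your display should read $\det(A+T\I)=\prod_j(T+\la^\downarrow_j(A))$ rather than $\det(A-T\I)=\prod_j(T+\la^\downarrow_j(A))$ --- but the intended identification of the characteristic roots with the decreasingly ordered eigenvalues is right, and nothing downstream is affected.
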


The following result is a special case of
\Cref{thm:hyperbolic}. 

\begin{theorem} \label{thm:Hermitian}
    Let $A : \R \to \on{Herm}(d)$ be a curve of $d \times d$ Hermitian matrices 
    of class $DC \cap W^{2,1}_{\on{loc}}$ on $\R$.
    \begin{enumerate}
    	\item Any differentiable system $\la=(\la_1,\ldots,\la_d)$ of the eigenvalues of $A$ is actually of class 
    	\[
				\la \in C^1(\R,\R^d) \cap DC(\R,\R^d)\cap  W^{2,1}_{\on{loc}}(\R,\R^d).    		
    	\]  
    	\item If $A$ additionally is of class $C^1$, then there exists a differentiable (thus $C^1$) system  
    	$\la=(\la_1,\ldots,\la_d)$ of the eigenvalues of $A$.
    	\item  The result is uniform in the following sense:
		Let $I \subseteq \R$ be a bounded open interval.
		Let $U$ be an open neighborhood of  the closure of $I^{1+k}$ in $\R^{1+k}$.
		Suppose that $A : U \to \on{Herm}(d)$ is such that 
		\begin{itemize}
			\item $A$ is locally DC on $U$,
			\item $A( \cdot,r)$ is of class $C^1 \cap W^{2,1}$ on $\ol I$ for all $r \in \ol I^k$.
		\end{itemize}
		Assume that, for each $r \in \ol I^k$, a $C^1$-system $\la(\cdot,r)$ of the eigenvalues of $A(\cdot,r)$ is fixed. 
		Then the family 
		$\la(\cdot,r)$, for $r \in \ol I^k$, is bounded in $C^1(\ol I, \R^d)$ and there is a non-negative 
		$L^1$-function $m : I^k \to \R_{\ge 0}$ such that  
		\begin{equation*} 
			\|\la(\cdot, r)\|_{W^{2,1}(I,\R^d)} \le m(r), \quad \text{ for a.e.\ } r \in I^k.
		\end{equation*}
    \end{enumerate}  
\end{theorem}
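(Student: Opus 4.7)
The plan is to deduce \Cref{thm:Hermitian} directly from \Cref{thm:hyperbolic} by taking $f = \det$ on $\on{Herm}(d)$ with respect to $v = \I$. First I would identify $\on{Herm}(d)$ with $\R^{d^2}$ via the real and imaginary parts of matrix entries, so that $\det$ becomes a homogeneous polynomial of degree $d$ in $d^2$ real variables. For any $A \in \on{Herm}(d)$ the univariate polynomial $T \mapsto \det(A - T\I)$ is, up to the sign $(-1)^d$, the characteristic polynomial of $A$; it is therefore real-rooted, with roots exactly the eigenvalues of $A$. Hence $\det \in \on{Hyp}^d_{d^2}(\I)$ and its characteristic map coincides with the eigenvalue map $A \mapsto \la^\downarrow(A)$ in decreasing order.

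With this identification, a system of the eigenvalues of a curve $A : \R \to \on{Herm}(d)$ is precisely a system of the roots of $\det$ along $A$ in the sense of the previous subsection. Parts (1), (2), and (3) of \Cref{thm:Hermitian} then follow item-by-item from the corresponding parts of \Cref{thm:hyperbolic} with $n = d^2$, $v = \I$, $f = \det$, and $x = A$. The hypotheses transfer without change because $\on{Herm}(d) \cong \R^{d^2}$ is a linear isomorphism, so local $DC$ regularity, $C^1$ regularity, and $W^{2,1}_{\on{loc}}$ regularity on $\on{Herm}(d)$ match those on $\R^{d^2}$ verbatim; in particular the hypothesis in (3) on the two-variable map $A : U \to \on{Herm}(d)$ translates exactly to the corresponding condition on the composition $U \to \R^{d^2}$.

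There is no serious obstacle here: the substantive analytic work (Bronshtein's theorem, the difference-convexity of the characteristic roots via G\r{a}rding's theorem on $\si_k$, the Luzin (N) argument for $W^{2,1}_{\on{loc}}$, and the uniform bounds coming from directional variation) has already been carried out in \Cref{thm:hyperbolic}. All that remains is to confirm that the characteristic polynomial identification places the Hermitian eigenvalue problem squarely inside the G\r{a}rding hyperbolic framework, which the formula $\det(A - T\I) = (-1)^d \prod_{j=1}^d (T - \la^\downarrow_j(A))$ makes transparent.
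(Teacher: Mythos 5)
Your proposal is correct and matches the paper's own argument: the paper likewise obtains \Cref{thm:Hermitian} as the special case of \Cref{thm:hyperbolic} with $f=\det$ on $\on{Herm}(d)\cong\R^{d^2}$ and $v=\I$, using that $\det$ is G\r{a}rding hyperbolic in the direction $\I$ with characteristic map the ordered eigenvalues. Nothing further is needed.
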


Note that the assumptions on $A$ are in particular satisfied if $A$ is of class $C^{1,1}$.

\begin{remark} \label{rem:optimal}
  The conclusion of this theorem is best-possible among all Sobolev spaces $W^{k,p}$. Indeed, by the Sobolev inequality,
  $W^{k,p}$-regularity with $k+p>2$ would imply $C^{1,\al}$-regularity with some $\al>0$, contradicting the counter-example 
  of \cite{KM03}: 
  there is a $C^\infty$-curve of symmetric $2 \times 2$ matrices the eigenvalues of which do not admit a $C^{1,\al}$-system 
  for any $\al>0$. 
\end{remark}

\begin{remark} \label{rem:Lax}
Statement (2) in \Cref{thm:Hermitian} is due to Rellich \cite{Rellich69} in the case of symmetric matrices, and it was proved 
for normal matrices in \cite{RainerN}. \Cref{lem:differentiable} gives an independent proof for Hermitian matrices.

Conversely, (2) in \Cref{thm:Hermitian} implies \Cref{lem:differentiable} if $n\le 3$: 
The Lax conjecture (posed 1958 in \cite{Lax:1958aa}) which by now is a theorem (as \cite{Lewis:2005aa} discovered the conjecture follows from a theorem 
of \cite{Helton:2007aa}, \cite{Vinnikov:1993aa}) states
that a homogeneous polynomial $f$ on $\R^3$ is G\r{a}rding hyperbolic of degree $d$ with respect to the direction $e_1$ with $f(e_1) = 1$ 
if and only if there exist real symmetric $d \times d$ matrices $A$ and $B$ such that 
\begin{equation} \label{eq:Lax}
	f(x,y,z) = \det ( x \mathbb I + y A + z B).
\end{equation}
So, for $n\le 3$, \Cref{thm:hyperbolic} follows from \Cref{thm:Hermitian}. 
A representation of type \eqref{eq:Lax} is in general not possible for G\r{a}rding hyperbolic polynomials on $\R^n$ with $n \ge 4$. 
Indeed, the dimension of $\on{Hyp}^d_n(v)$ is $\binom{n+d -1}{d}$ while the set of polynomials in $\R[X_1,\ldots,X_n]$ of the form 
\begin{equation} \label{eq:det}
	\det(X_1 \I + X_2 A_2 + \cdots + X_n A_n),	
\end{equation}
where $A_i$ are real symmetric $d \times d$ matrices, is at most $(n-1) \cdot \binom{d+1}{2}$.

A particular homogeneous polynomial of degree $2$ which is G\r{a}rding hyperbolic with respect to $e_1$ but cannot be represented in the form 
	\eqref{eq:det} 
	is the Lorentzian polynomial $f(X) =X_1^2 - X_2^2 - \cdots - X_n^2$ for $n \ge 4$. Indeed, if $x_1 =0$ and $x_j \ne 0$ for some $j\in {2,\ldots,n}$ 
	then $f(x) <0$. But for any choice of real symmetric matrices $A_i$ we may find $x \in \R^n \setminus \{0\}$ with $x_1 =0$ such that 
	the first row of $x_1 \I + x_2 A_2 + \cdots + x_n A_n$ is zero. 
Cf.  \cite[p.2498]{Lewis:2005aa}.
\end{remark}

\subsection{Singular values}
Let $A$ be any $m \times n$ matrix with complex entries and let 
$$\si^\downarrow_1(A) \ge \si^\downarrow_2(A) \ge \cdots \ge \si^\downarrow_n(A) \ge 0$$ 
be the singular values of $A$ in decreasing order, i.e., the non-negative square roots of the eigenvalues of $A^* A$. 
If $\on{rank} A = \ell$, then $\si^\downarrow_{\ell+1}(A) = \cdots = \si^\downarrow_{n}(A) = 0$.
Thus we set $\ell := \min\{m,n\}$ and consider only $\si^\downarrow_{j} (A)$, for $j = 1,\ldots,\ell$.

We may consider the $\si^\downarrow_j$ as functions on the vector space $\C^{m \times n}$ of complex $m \times n$ matrices.
Without loss of generality assume that $m \le n$ and let $\tilde A$ be the $n \times n$ matrix resulting from $A$ by 
	adding $n-m$ rows consisting of zeros. Then the eigenvalues of the Hermitian matrix 
	\begin{equation} \label{eq:Hermitianext}
		\begin{pmatrix}
			0 & \tilde A \\
			\tilde A^* & 0
		\end{pmatrix}
	\end{equation}
	are precisely 
	\[
		\si^\downarrow_1(A) \ge   \cdots \ge \si^\downarrow_n(A) \ge 
		-\si^\downarrow_n(A) \ge  \cdots \ge - \si^\downarrow_1(A).
	\]
It follows from \Cref{cor:HermLip} that, for all $k = 1,\ldots,\ell$, the sum 
\begin{equation} \label{eq:KyFan}
	\sum_{j = 1}^k \si^\downarrow_j
\end{equation}
of the $k$ largest singular values is a sublinear function on $\C^{m \times n}$ viewed as a real vector space. In fact, the sums 
$A \mapsto \sum_{j = 1}^k \si^\downarrow_j(A)$ are the so-called \emph{Ky Fan norms}. 
We obtain

\begin{corollary} 
	The mapping 
	$\si^\downarrow = (\si^\downarrow_1, \ldots,\si^\downarrow_\ell) : \C^{m \times n}  \to \R^\ell$, where $\ell = \min\{m,n\}$, 
	is globally Lipschitz and difference-convex on $\C^{m \times n}$.
\end{corollary}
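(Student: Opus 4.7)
The plan is to deduce the corollary directly from \Cref{cor:HermLip} via the Hermitian extension \eqref{eq:Hermitianext}. Without loss of generality assume $m \le n$ so that $\ell = m$. I define the real-linear map
\[
	\Phi : \C^{m \times n} \to \on{Herm}(2n), \quad A \mapsto \begin{pmatrix} 0 & \tilde A \\ \tilde A^* & 0 \end{pmatrix},
\]
where $\tilde A$ denotes $A$ padded with $n-m$ zero rows. As recalled in the paragraph preceding the statement, the $2n$ eigenvalues of $\Phi(A)$ are precisely
\[
	\si^\downarrow_1(A) \ge \cdots \ge \si^\downarrow_m(A) \ge 0 = \cdots = 0 \ge -\si^\downarrow_m(A) \ge \cdots \ge -\si^\downarrow_1(A),
\]
so for each $j = 1,\ldots, m$ we have the identity $\si^\downarrow_j(A) = \la^\downarrow_j(\Phi(A))$.

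Next I would invoke the two properties of $\la^\downarrow : \on{Herm}(2n) \to \R^{2n}$ provided by \Cref{cor:HermLip}: it is globally Lipschitz and difference-convex. Since $\Phi$ is $\R$-linear, it is in particular globally Lipschitz (with respect to any choice of norms on $\C^{m \times n}$ and $\on{Herm}(2n)$ viewed as real vector spaces) and each of its components is $DC$. The Lipschitz property of $\si^\downarrow_j = \la^\downarrow_j \o \Phi$ then follows immediately from composition of Lipschitz maps. For the $DC$ property, I apply \Cref{properties2}(2), which tells us that the composition of a $DC$-mapping with a $DC$-function is locally $DC$; combined with \Cref{properties2}(1), which asserts $DC(\R^N) = DC_{\on{loc}}(\R^N)$ on all of Euclidean space, this yields $\si^\downarrow_j \in DC(\C^{m \times n})$ for each $j$. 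Assembling the components gives the claim for $\si^\downarrow = (\si^\downarrow_1,\ldots,\si^\downarrow_\ell)$.

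There is essentially no obstacle here; the only point requiring a word of care is that the characteristic map of $\det$ on $\on{Herm}(2n)$ with respect to $\I$ returns all $2n$ eigenvalues, and we must select the first $m$ of them to recover the singular values. Since these selected components are themselves globally Lipschitz and $DC$ (each component of a $DC$-mapping is by definition a $DC$-function), the selection does not cost regularity. The case $m > n$ is handled symmetrically by replacing $A$ with $A^*$, which has the same non-zero singular values and realizes the same embedding after transposition.
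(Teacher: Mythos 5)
Your argument is correct: the identity $\si^\downarrow_j(A)=\la^\downarrow_j(\Phi(A))$ for $j\le\ell$ holds as you state, $\Phi$ is real-linear, and the hypotheses of \Cref{properties2}(2) (domain and codomain equal to full Euclidean spaces, hence convex and open) are satisfied, so together with \Cref{properties2}(1) and \Cref{cor:HermLip} you do get that each $\si^\downarrow_j$ is globally DC and, by composition of Lipschitz maps, globally Lipschitz; the reduction of the case $m>n$ to $A\mapsto A^*$ is also fine since conjugate transposition is $\R$-linear. The route differs from the paper's in how DC-ness is transferred: the paper does not compose the full characteristic map with $\Phi$ and invoke Hartman's composition lemma, but instead uses the second statement of \Cref{cor:HermLip} (sublinearity of the partial eigenvalue sums) to conclude that the Ky Fan norms \eqref{eq:KyFan} are sublinear on $\C^{m\times n}$, and then writes $\si^\downarrow_k=\sum_{j\le k}\si^\downarrow_j-\sum_{j\le k-1}\si^\downarrow_j$, exactly mirroring the proof of \Cref{prop:DC}. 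The paper's decomposition has the advantage of being explicit (the convex functions in the DC representation are the Ky Fan norms, which are even norms, and global Lipschitzness is immediate from finiteness and sublinearity via Rockafellar); your composition argument is slightly heavier than necessary, since precomposing a convex function with a linear map is already convex, so you could bypass \Cref{properties2} entirely and push an explicit DC decomposition of $\la^\downarrow_j$ through $\Phi$ directly --- but as written it is a valid proof relying on the same Hermitian extension \eqref{eq:Hermitianext} and on \Cref{cor:HermLip}.
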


The following theorem is a consequence of \Cref{thm:Hermitian}.

\begin{theorem}
    Let $A : \R \to \C^{m \times n}$ be a curve of $m \times n$ complex matrices
    of class $C^1 \cap DC \cap W^{2,1}_{\on{loc}}$.  
    If $\on{rank} A(t) = \min\{m,n\}=:\ell$ for all $t$, then  
    there exists a system $\si = (\si_1,\ldots,\si_\ell)$ of the singular values of $A$ 
    such that $\si \in C^1(\R,\R^\ell) \cap DC(\R,\R^\ell)\cap W^{2,1}_{\on{loc}}(\R,\R^\ell)$.
    This result is uniform in the sense explained in \Cref{thm:Hermitian}.
\end{theorem}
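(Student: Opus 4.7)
The plan is to reduce the statement to the Hermitian case via the classical Jordan--Wielandt construction and then apply \Cref{thm:Hermitian}. Without loss of generality, assume $m\le n$, so $\ell = m$. For each $t$ let $\tilde A(t)$ denote the $n \times n$ matrix obtained by appending $n-m$ zero rows to $A(t)$, and set
\[
	B(t) := \begin{pmatrix} 0 & \tilde A(t) \\ \tilde A^*(t) & 0 \end{pmatrix} \in \on{Herm}(2n).
\]
Since the entries of $B$ are $\R$-linear combinations of the real and imaginary parts of the entries of $A$, the curve $B : \R \to \on{Herm}(2n)$ again belongs to $C^1 \cap DC \cap W^{2,1}_{\on{loc}}$, and by \eqref{eq:Hermitianext} its eigenvalues at each $t$ are
$\si_1^\downarrow(A(t)) \ge \cdots \ge \si_m^\downarrow(A(t)) \ge 0 \ge -\si_m^\downarrow(A(t)) \ge \cdots \ge -\si_1^\downarrow(A(t))$
together with $2(n-m)$ additional zeros.

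Applying \Cref{thm:Hermitian}(2) to $B$ yields a differentiable system $\mu = (\mu_1,\ldots,\mu_{2n})$ of its eigenvalues, and by \Cref{thm:Hermitian}(1) this system actually lies in $C^1(\R,\R^{2n})\cap DC(\R,\R^{2n})\cap W^{2,1}_{\on{loc}}(\R,\R^{2n})$. The key step is to exploit the rank hypothesis to show that each component $\mu_j$ is either identically zero or nowhere zero on $\R$. Indeed, the rank assumption forces the multiplicity of $0$ as an eigenvalue of $B(t)$ to be the constant $2(n-m)$; equivalently $|\{j : \mu_j(t) = 0\}| = 2(n-m)$ for every $t$. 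If $\mu_j(t_0)=0$ for some $t_0$, then by continuity the indices $j$ with $\mu_j(t_0)=0$ remain among those vanishing at $t$ close to $t_0$, while none of the components with $\mu_k(t_0)\ne 0$ can vanish nearby; the constancy of the count of zero eigenvalues therefore forces $\mu_j(t)=0$ for all $t$ in a neighborhood. Thus the zero set $\mu_j^{-1}(0)$ is open and closed in the connected space $\R$, hence equals $\emptyset$ or $\R$. Every nonzero $\mu_j$ being continuous and nowhere zero must then have constant sign, and by the $\pm$-symmetry of the spectrum of $B$ there are exactly $m$ indices $j$ for which $\mu_j>0$ everywhere.

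Let $P \subseteq \{1,\ldots,2n\}$ be this set of $m$ indices and set $\si := (\mu_j)_{j \in P}$. Then $\si$ is a system of the singular values of $A$ in the desired sense and inherits the regularity $C^1 \cap DC \cap W^{2,1}_{\on{loc}}$ from $\mu$. The uniform statement follows by the same selection applied pointwise in the parameter $r$: \Cref{thm:Hermitian}(3), applied to the parametrized Hermitian curve $B(\cdot,r)$ (whose regularity class is preserved under the same linear construction), yields $C^1(\ol I,\R^{2n})$-boundedness and an $L^1$-bound $m(r)$ on the $W^{2,1}$-norms of the chosen systems $\mu(\cdot,r)$; restricting to the $m$ positive components for each $r$ preserves these bounds.

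The main obstacle is the sign-constancy step in the middle paragraph: a priori a continuous eigenvalue branch could cross zero, and the argument ruling this out really uses the constant-rank assumption together with the invariance of the cardinality of the zero set under small perturbations of $t$. Everything else is bookkeeping from \Cref{thm:Hermitian}.
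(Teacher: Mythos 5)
Your proof is correct and follows essentially the same route as the paper: reduce to the Hermitian case via the block matrix \eqref{eq:Hermitianext} and apply \Cref{thm:Hermitian}, using the constant-rank hypothesis to guarantee that the nontrivial singular values stay strictly positive so that the positive eigenvalue branches can be selected. You merely make explicit the open--closed argument showing that each eigenvalue branch through zero is identically zero, which the paper's proof leaves implicit.
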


\begin{proof}
	The assertions follow from \Cref{thm:Hermitian} applied to the Hermitian matrix \eqref{eq:Hermitianext}. 
	The condition on the rank of $A$ guarantees that 
	the non-trivial singular values of $A$ are always strictly positive and hence there exists a $C^1$-system of them, 
	since there exists a $C^1$-system of the eigenvalues of \eqref{eq:Hermitianext}.  
\end{proof}

The rank condition is necessary; for instance, the singular value of the symmetric $1\times 1$ matrix $A(t) = (t)$ is $|t|$
which does not admit a $C^1$ parameterization.


\begin{thebibliography}{10}

\bibitem{AKLM98}
D.~Alekseevsky, A.~Kriegl, M.~Losik, and P.~W. Michor, \emph{Choosing roots of
  polynomials smoothly}, Israel J. Math. \textbf{105} (1998), 203--233.

\bibitem{AFP00}
L.~Ambrosio, N.~Fusco, and D.~Pallara, \emph{Functions of bounded variation and
  free discontinuity problems}, Oxford Mathematical Monographs, The Clarendon
  Press Oxford University Press, New York, 2000.

\bibitem{ABG70}
M.~F. Atiyah, R.~Bott, and L.~G{\.a}rding, \emph{Lacunas for hyperbolic
  differential operators with constant coefficients. {I}}, Acta Math.
  \textbf{124} (1970), 109--189.

\bibitem{BacakBorwein11}
M.~Ba{\v{c}}{{\'a}}k and J.~M. Borwein, \emph{On difference convexity of
  locally {L}ipschitz functions}, Optimization \textbf{60} (2011), no.~8-9,
  961--978. 

\bibitem{BGLS01}
H.~H. Bauschke, O.~G{\"u}ler, A.~S. Lewis, and H.~S. Sendov, \emph{Hyperbolic
  polynomials and convex analysis}, Canad. J. Math. \textbf{53} (2001), no.~3,
  470--488.

\bibitem{BBCP06}
J.-M. Bony, F.~Broglia, F.~Colombini, and L.~Pernazza, \emph{Nonnegative
  functions as squares or sums of squares}, J. Funct. Anal. \textbf{232}
  (2006), no.~1, 137--147.  

\bibitem{Borcea:2010aa}
J.~Borcea and P.~Br{\"a}nd{\'e}n, \emph{{Multivariate P{\'o}lya-Schur
  classification problems in the Weyl algebra}}, Proceedings of the London
  Mathematical Society \textbf{101} (2010), no.~1, 73--104.

\bibitem{Bourdaud:1991aa}
G.~Bourdaud, \emph{Le calcul fonctionnel dans les espaces de {S}obolev},
  Invent. Math. \textbf{104} (1991), no.~2, 435--446. 

\bibitem{Bronshtein79}
M.~D. Bronshtein, \emph{Smoothness of roots of polynomials depending on
  parameters}, Sibirsk. Mat. Zh. \textbf{20} (1979), no.~3, 493--501, 690,
  English transl. in Siberian Math. J. \textbf{20} (1980), 347--352.

\bibitem{ColombiniOrruPernazza12}
F.~Colombini, N.~Orr{{\`u}}, and L.~Pernazza, \emph{On the regularity of the
  roots of hyperbolic polynomials}, Israel J. Math. \textbf{191} (2012),
  923--944. 

\bibitem{Dudley:1977aa}
R.~M. Dudley, \emph{On second derivatives of convex functions}, Math. Scand.
  \textbf{41} (1977), no.~1, 159--174.

\bibitem{Garding51}
L.~G{\r{a}}rding, \emph{Linear hyperbolic partial differential equations with
  constant coefficients}, Acta Math. \textbf{85} (1951), 1--62.

\bibitem{Garding59}
\bysame, \emph{An inequality for hyperbolic polynomials}, J. Math. Mech.
  \textbf{8} (1959), 957--965.

\bibitem{Gurvits:2008aa}
L.~Gurvits, \emph{Van der {W}aerden/{S}chrijver-{V}aliant like conjectures and
  stable (aka hyperbolic) homogeneous polynomials: one theorem for all},
  Electron. J. Combin. \textbf{15} (2008), no.~1, Research Paper 66, 26, With a
  corrigendum. 

\bibitem{Hartman:1959aa}
P.~Hartman, \emph{On functions representable as a difference of convex
  functions}, Pacific J. Math. \textbf{9} (1959), no.~3, 707--713.

\bibitem{Harvey:2013aa}
F.~R. Harvey and H.~B. Lawson, Jr., \emph{G\aa rding's theory of hyperbolic
  polynomials}, Comm. Pure Appl. Math. \textbf{66} (2013), no.~7, 1102--1128.

\bibitem{Helton:2007aa}
J.~W. {Helton} and V.~{Vinnikov}, \emph{{Linear matrix inequality
  representation of sets}}, {Commun. Pure Appl. Math.} \textbf{60} (2007),
  no.~5, 654--674 (English).

\bibitem{Hiriart-Urruty:1985aa}
J.-B. Hiriart-Urruty, \emph{Generalized differentiability, duality and
  optimization for problems dealing with differences of convex functions},
  Convexity and duality in optimization ({G}roningen, 1984), Lecture Notes in
  Econom. and Math. Systems, vol. 256, Springer, Berlin, 1985, pp.~37--70.

\bibitem{KM03}
A.~Kriegl and P.~W. Michor, \emph{Differentiable perturbation of unbounded
  operators}, Math. Ann. \textbf{327} (2003), no.~1, 191--201.

\bibitem{KurdykaPaunescu08}
K.~Kurdyka and L.~Paunescu, \emph{Hyperbolic polynomials and multiparameter
  real-analytic perturbation theory}, Duke Math. J. \textbf{141} (2008), no.~1,
  123--149.

\bibitem{Lax:1958aa}
P.~D. Lax, \emph{Differential equations, difference equations and matrix
  theory}, Comm. Pure Appl. Math. \textbf{11} (1958), 175--194. 

\bibitem{Leoni09}
G.~Leoni, \emph{A first course in {S}obolev spaces}, Graduate Studies in
  Mathematics, vol. 105, American Mathematical Society, Providence, RI, 2009.

\bibitem{Lewis:2005aa}
A.~S. {Lewis}, P.~A. {Parrilo}, and M.~V. {Ramana}, \emph{{The Lax conjecture
  is true.}}, {Proc. Am. Math. Soc.} \textbf{133} (2005), no.~9, 2495--2499
  (English).


\bibitem{Marcus:2015aa}
A.~W. Marcus, D.~A. Spielman, and N.~Srivastava, \emph{Interlacing families
  {II}: {M}ixed characteristic polynomials and the {K}adison-{S}inger problem},
  Ann. of Math. (2) \textbf{182} (2015), no.~1, 327--350. 

\bibitem{Nuij68}
W.~Nuij, \emph{A note on hyperbolic polynomials}, Math. Scand. \textbf{23}
  (1968), 69--72 (1969).

\bibitem{ParusinskiRainerHyp}
A.~Parusi{{\'n}}ski and A.~Rainer, \emph{A new proof of {B}ronshtein's
  theorem}, J. Hyperbolic Differ. Equ. \textbf{12} (2015), no.~4, 671--688.

\bibitem{RainerQA}
A.~Rainer, \emph{Quasianalytic multiparameter perturbation of polynomials and
  normal matrices}, Trans. Amer. Math. Soc. \textbf{363} (2011), no.~9,
  4945--4977.

\bibitem{RainerN}
\bysame, \emph{Perturbation theory for normal operators}, Trans. Amer. Math.
  Soc. \textbf{365} (2013), no.~10, 5545--5577. 

\bibitem{Rellich37I}
F.~Rellich, \emph{St\"orungstheorie der {S}pektralzerlegung}, Math. Ann.
  \textbf{113} (1937), no.~1, 600--619.

\bibitem{Rellich69}
\bysame, \emph{Perturbation theory of eigenvalue problems}, Assisted by J.
  Berkowitz. With a preface by Jacob T. Schwartz, Gordon and Breach Science
  Publishers, New York, 1969.

\bibitem{Rockafellar70}
R.~T. Rockafellar, \emph{Convex analysis}, Princeton Mathematical Series, No.
  28, Princeton University Press, Princeton, N.J., 1970.

\bibitem{Vesely:2003aa}
L.~{Vesel\'y}, J.~{Duda}, and L.~{Zaj\'{\i}\v{c}ek}, \emph{{On d.c.~functions
  and mappings.}}, {Atti Semin. Mat. Fis. Univ. Modena} \textbf{51} (2003),
  no.~1, 111--138 (English).

\bibitem{VeselyZajicek89}
L.~Vesel{{\'y}} and L.~Zaj{\'{\i}}{\v{c}}ek, \emph{Delta-convex mappings
  between {B}anach spaces and applications}, Dissertationes Math. (Rozprawy
  Mat.) \textbf{289} (1989), 52. 

\bibitem{Vinnikov:1993aa}
V.~Vinnikov, \emph{Self-adjoint determinantal representations of real plane
  curves}, Math. Ann. \textbf{296} (1993), no.~1, 453--479.

\end{thebibliography}

\def\cprime{$'$}
\providecommand{\bysame}{\leavevmode\hbox to3em{\hrulefill}\thinspace}
\providecommand{\MR}{\relax\ifhmode\unskip\space\fi MR }
\providecommand{\MRhref}[2]{%
  \href{http://www.ams.org/mathscinet-getitem?mr=#1}{#2}
}
\providecommand{\href}[2]{#2}

\end{document}